\theoremstyle{plain}
\newtheorem{thm}{Theorem}[section]
\newtheorem{lem}[thm]{Lemma}
\newtheorem{prop}[thm]{Proposition}
\newtheorem{defi}[thm]{Definition}
\newtheorem{rem}[thm]{Remark}
\numberwithin{equation}{section}
\newcommand{\cP}{\mathcal{P}}
\newcommand{\wcA}{\widehat{\mathcal{A}}}
\newcommand{\wcB}{\widehat{\mathcal{B}}}
\newcommand{\wcC}{\widehat{\mathcal{C}}}
\newcommand{\cD}{\mathcal{D}}
\newcommand{\cZ}{\mathcal{Z}}
\newcommand{\II}{\mathbb{I}}
\newcommand{\JJ}{\mathbb{J}}
\newcommand{\NN}{\mathbb{N}}
\newcommand{\bv}{\mathbf{v}}
\newcommand{\bx}{\mathbf{x}}
\newcommand{\by}{\mathbf{y}}
\numberwithin{equation}{section}
\title{\bf A bivariate $Q$-polynomial structure for\\ the non-binary Johnson scheme }
\renewcommand*{\Affilfont}{\normalsize\small}
\author[1]{Nicolas Cramp\'e}
\author[2]{Luc Vinet}
\author[3]{Meri Zaimi}
\author[4]{Xiaohong Zhang\vspace{.5em}}
\affil[1]{\textit{Institut Denis-Poisson CNRS/UMR 7013 - Université de Tours - Université
d'Orléans, \newline\vspace{.9em}
Parc de Grandmont, 37200 Tours, France.}}
\affil[2,3,4]{\textit{Centre de Recherches Math\'ematiques, Universit\'e de Montr\'eal,
\newline\vspace{.9em}
P.O. Box 6128, Centre-ville Station, Montr\'eal (Qu\'ebec), H3C 3J7, Canada.}}
\affil[2]{\textit{IVADO, Montr\'eal (Qu\'ebec), H2S 3H1, Canada. \newline\vspace{.9em}}}
 \renewcommand\AB@affilsepx{: \protect\Affilfont}
 \affil[ ]{E-mail addresses}
 \renewcommand\AB@affilsepx{, \protect\Affilfont}
 \affil[1]{crampe1977@gmail.com}
  \affil[2]{luc.vinet@umontreal.ca}
  \affil[3]{meri.zaimi@umontreal.ca}
  \affil[4]{xiaohong.zhang@umontreal.ca}
\date{\today}
\begin{document}
\maketitle

\begin{abstract}
The notion of multivariate $P$- and $Q$-polynomial association scheme has been introduced recently, generalizing the well-known
univariate case. Numerous examples of such association schemes
have already been exhibited. In particular, it has been demonstrated that the non-binary Johnson scheme is a bivariate $P$-polynomial association scheme. 
We show here that it is also a bivariate $Q$-polynomial association scheme for some parameters. This provides, with the $P$-polynomial structure, the bispectral property (\textit{i.e.}\ the recurrence and difference relations) of a family of bivariate orthogonal polynomials made out of univariate Krawtchouk and dual Hahn polynomials. The algebra based on the bispectral operators is also studied together with the subconstituent algebra of this association scheme.
\end{abstract}

\section{Introduction}

The main purpose of this paper is to provide a bivariate $Q$-polynomial structure for the non-binary Johnson scheme.

Association schemes are basic objects in algebraic combinatorics which arise in the study of various topics such as coding, design and group theories \cite{BI,God2,Del,BM,Bai,Zie2}. An important class of association schemes are those which
are said to be $P$- and/or $Q$-polynomial. These association schemes have a structure involving orthogonal polynomials. For association schemes which are both $P$- and $Q$-polynomial,
the underlying orthogonal polynomials are bispectral (\textit{i.e.}\ they satisfy a differential or difference equation in addition to their recurrence relation) and belong to the Askey scheme \cite{Leo,BI,KLS}. Moreover, the algebraic
structures that can be defined from $P$- and $Q$-polynomial association schemes are closely related to the theory of tridiagonal pairs, Leonard pairs, and to the Askey--Wilson algebra \cite{ITT,Ter01,Zhedanov}.

The concepts of $P$-polynomial and $Q$-polynomial association schemes have recently been extended to the bivariate case in \cite{BCPVZ} and to the multivariate case in \cite{BKZZ}. In either case, 
similar characterizations of such schemes in terms of the recurrence relation of certain bivariate or multivariate polynomials, and in terms of constraints on intersection numbers or Krein parameters are given, as for the univariate case.  
This provides a framework for studying the polynomial structures of higher rank association schemes.
Several examples of association schemes have been shown to admit a bivariate or multivariate polynomial structure in \cite{BCPVZ,BKZZ}: direct product of association schemes, the symmetrization of association schemes (also called extension), the non-binary Johnson scheme and generalized Johnson schemes, association schemes based on isotropic or attenuated spaces, and others.
For some of these examples, such as the non-binary Johnson scheme, only a bivariate or multivariate $P$-polynomial structure was obtained. In this paper, we focus on the non-binary Johnson scheme $J_r(k,n)$ and show that, at least for $n\geq 2k-1$, it admits a bivariate $Q$-polynomial structure with respect to the definitions introduced in \cite{BCPVZ}. This allows to obtain the bispectrality of the associated polynomials. We also examine some algebras related to this example of bivariate $P$- and $Q$-polynomial association scheme,  namely the algebra of bispectrality and the subconstituent algebra.       

The paper is organized as follows. In Section \ref{sec:BiPQAS}, the notion of association scheme and the bivariate $P$- and $Q$-polynomial properties are recalled. In Section \ref{sec:nbJAS}, the definition of the non-binary Johnson scheme and some of its important features are reviewed. In Section \ref{sec:biQpoly}, a bivariate $Q$-polynomial structure for the non-binary Johnson scheme is provided through the study of the recurrence properties of its dual eigenvalues. In Section \ref{sec:bispectrality}, the bispectrality of the bivariate polynomials associated to the non-binary Johnson scheme is discussed. Moreover, the algebra of the associated bispectral operators as well as the subconstituent algebra of the non-binary Johnson scheme are explored and connected. Some relevant properties of hypergeometric polynomials are recalled in Appendix \ref{app:A}.    

\section{Bivariate $P$- and $Q$-polynomial association schemes} \label{sec:BiPQAS}

\subsection{Association scheme}

Let us recall the definition of an association scheme (see \cite{BI,BIT} for more details). 
The set $\cZ=\{A_0,\dots,A_N\}$ is a symmetric commutative association scheme with $N$ classes if 
the matrices $A_i$, called adjacency matrices, are non-zero $\bv\times \bv$ matrices with $0$ and $1$ entries
satisfying:
\begin{itemize}
 \item[(i)] $A_0=\II$, where $\II$ is the $\bv\times \bv$ identity matrix;
 \item[(ii)] $\displaystyle \sum_{i=0}^N A_i=\JJ$,  where $\JJ$ is the $\bv\times \bv$ matrix filled with $1$;
 \item[(iii)] $A_i^t=A_i$ for $i=0,1,\dots N$ and $.^t$ stands for the transposition;
 \item[(iv)] The following relations hold
 \begin{equation}
  A_iA_j=A_jA_i=\sum_{k=0}^N p_{ij}^k A_k,
 \end{equation}
 where $p_{ij}^k$ are constants called intersection numbers.
\end{itemize}
The (commutative) algebra generated by the $A_i$'s is called the Bose-Mesner algebra of the association scheme. 

\subsection{Monomial orders}

The total order \textit{deg-lex} on the monomials, denoted $\leq$, is defined by 
\begin{equation}
 x^my^n \leq x^iy^j \Leftrightarrow \begin{cases}
                                         m+n<i+j \\
                                         \text{or}\\
                                        m+n=i+j \quad\text{and}\quad n \leq j\,.
                                        \end{cases} \label{eq:ord}
\end{equation}
The degree, associated to the total order \textit{deg-lex}, of a polynomial $v(x,y)$ in two variables $x$ and $y$ is the couple $(i,j)$ such that $x^iy^j$ is the greatest monomial in $v(x,y)$.

We refine this definition with the following partial order on monomials:
\begin{equation}
 x^my^n \preceq_{(\alpha,\beta)}  x^i y^j \Leftrightarrow \begin{cases}
                                        m+\alpha n\leq i+\alpha j \\
                                         \text{and}\\
                                     \beta m+ n\leq \beta i+ j\,,
                                        \end{cases} \label{eq:partord}
\end{equation}
where  $0 \leq \alpha\leq 1$ and $0 \leq \beta < 1$. This also defines a partial order on $\NN^2$.

This leads to the following two definitions for bivariate polynomials and subsets of $\NN^2$.
A bivariate polynomial $v(x,y)$ is called $(\alpha,\beta)$-compatible of degree $(i,j)$ if the monomial $x^iy^j$ appears and all other monomials $x^my^n$ appearing are smaller than $x^iy^j$ for the order $\preceq_{(\alpha,\beta)}$.
A subset $\cD$ of $\NN^2$ is called $(\alpha,\beta)$-compatible if for any $(i,j) \in \cD$, one gets 
\begin{equation} 
\Big( (m,n)\preceq_{(\alpha,\beta)} (i,j) \Big) \Rightarrow  \Big( (m,n)\in \cD \Big).
\end{equation}

\subsection{Bivariate $P$- and $Q$-polynomial association scheme}

The notion of $P$-polynomial association scheme has been generalized to the bivariate case as follows in \cite{BCPVZ}.
\begin{defi} \label{def:bi}
Let $\cD \subset \NN^2$, $0 \leq \alpha\leq 1$, $0 \leq \beta<1$ and $\preceq_{(\alpha,\beta)}$ be the order defined in \eqref{eq:partord}.
 The association scheme $\cZ=\{A_0,\dots,A_N\}$ is called a bivariate $P$-polynomial association scheme of type $(\alpha,\beta)$ on the domain $\cD$ if these two conditions are satisfied:
 \begin{itemize}
  \item[(i)] there exists a relabeling of the adjacency matrices:
 \begin{equation}
  \{A_0,A_1,\dots, A_N\} = \{ A_{mn} \ |\ (m,n) \in \cD \},
 \end{equation}
such that, for $(i,j) \in \cD$,
\begin{equation}
 A_{ij}=v_{ij}(A_{10},A_{01})\,, \label{eq:vij}
\end{equation}
where  $v_{ij}(x,y)$ is a $(\alpha,\beta)$-compatible bivariate polynomial of degree $(i,j)$;
\item[(ii)] $\cD$ is $(\alpha,\beta)$-compatible.
 \end{itemize}
\end{defi}
Let us remark that the previous definition can also be given for other choices of the orders (see  \cite{BKZZ} for a generalization of this definition). With the relabeling of the adjacency matrices as in Definition \ref{def:bi}, the intersection numbers now read
\begin{equation}
 A_{ij}A_{k\ell}= \sum_{(m,n)\in \cD}  p_{ij,k\ell}^{mn}\, A_{mn} \, .  \label{eq:intern}
\end{equation}

Let $\cZ=\{ A_{mn} \ |\ (m,n) \in \cD \}$ be a bivariate $P$-polynomial association scheme of  type $(\alpha,\beta)$.
Since the matrices $A_{ij}$ are pairwise commuting, they can be diagonalized in the same basis. 
The vector space $V$ of dimension $\bv$, on which the adjacency matrices act, can be decomposed as follows 
\begin{equation}
 V=\bigoplus_{(m,n) \in \cD^\star} V_{mn}\,,
\end{equation}
where $\cD^\star$ is a subset of $\NN^2$ with the same cardinality as $\cD$ and $V_{mn}$ is a common eigenspace for all the matrices $A_{ij}$.
Let $E_{mn}$ with $(m,n) \in \cD^\star$ denote the projector on the corresponding eigenspace: $E_{mn}V=V_{mn}$. They satisfy
\begin{eqnarray}
 &&E_{mn}E_{pq}=\delta_{mn,pq} E_{mn}\ , \qquad \sum_{(m,n) \in \cD^\star} E_{mn}=\II \ , \qquad E_{00}=\frac{1}{\bv} \JJ,\\
 &&  A_{ij} =\sum_{(m,n) \in \cD^\star} p_{ij}(mn) E_{mn}\,, \label{eq:AE}
\end{eqnarray}
with $p_{ij}(mn) $ the eigenvalues of $A_{ij}$ in the subspace $V_{mn}$. The idempotents $E_{mn}$ also generate the Bose--Mesner algebra.
One gets a relation between these eigenvelues  and the polynomials $v_{ij}(x,y)$:
\begin{equation}
  p_{ij}(mn)=v_{ij}(\theta_{mn},  \mu_{mn})\,, \label{eq:pv}
 \end{equation}
where $\theta_{mn}=p_{10}(mn)$ and $\mu_{mn}=p_{01}(mn)$ are the eigenvalues of $A_{10}$ and $A_{01}$ on $E_{mn}$, respectively, and $v_{ij}$ is as in Definition~\ref{def:bi} ($A_{ij}=v_{ij}(A_{10},A_{01})$).
On the other hand, if an association scheme $\{A_{ij} \ | \  (i,j)\in \cD \} $ on an $(\alpha,\beta)$-compatible region $\cD$  has eigenvalues
satisfying \eqref{eq:pv} for some $(\alpha,\beta)$-compatible bivariate polynomial $v_{ij}(x,y)$  of degree $(i,j)$, 
then this scheme is a bivariate $P$-polynomial association scheme of type $(\alpha,\beta)$.

Relation \eqref{eq:AE} can be inverted and one gets
\begin{eqnarray}
E_{mn} =\frac{1}{\bv}\sum_{(i,j) \in \cD} q_{mn}(ij) A_{ij}\ . \label{eq:EA}
\end{eqnarray}
The parameters $q_{mn}(ij)$, called dual eigenvalues, are related to the eigenvalues $p_{ij}(mn) $ by the Wilson duality \cite{BI}:
\begin{equation}
 \frac{q_{mn}(ij) }{m_{mn}}=\frac{p_{ij}(mn) }{k_{ij}}\,, \label{eq:qp}
\end{equation}
where 
\begin{equation}
 k_{ij}=p_{ij}(00), \qquad m_{ij}=q_{ij}(00)\,, \label{eq:km}
\end{equation}
 are the valence and the multiplicity, respectively.
This relation holds for any symmetric association scheme and the proof can be found in \cite[Theorem 3.5]{BI} for example. 

The idempotents $E_{ij}$ of an association scheme also satisfy a relation dual to \eqref{eq:intern} given by 
\begin{equation}
 E_{ij} \circ E_{k\ell} = \frac{1}{\bv}\sum_{(m,n)\in\cD^\star} q_{ij,k\ell}^{mn} E_{mn}\,,
\end{equation}
where $\circ$ is the Hadamard product (or entrywise product). The numbers $q_{ij,k\ell}^{mn}$ are called Krein parameters.

The notion of $Q$-polynomial association scheme is developed in \cite{Del} (see also \cite{BCN,TerCourse}) and is dual to the $P$-polynomial one.
A generalization to bivariate $Q$-polynomial association scheme is given below \cite{BCPVZ}.
\begin{defi} \label{def:biQ}
Let $\cD^\star \subset \NN^2$, $0 \leq \alpha\leq 1$, $0 \leq \beta<1$ and $\preceq_{(\alpha,\beta)}$ be the order \eqref{eq:partord}.
 The association scheme with idempotents $E_0,E_1,\dots E_N$ is called a bivariate $Q$-polynomial of type $(\alpha,\beta)$ on the domain $\cD^\star$ if these two conditions are satisfied:
\begin{itemize}
  \item[(i)]there exists a relabeling of the idempotents:
 \begin{equation}
  \{E_0,E_1,\dots E_N\} = \{ E_{mn} \ |\ (m,n) \in \cD^\star \}\,,
 \end{equation}
 such that, for $(m,n) \in \cD^\star$,
\begin{equation}
 \bv\, E_{mn}=v^\star_{mn}( \bv\,E_{10}, \bv\, E_{01})\quad \text{(under the Hadamard product)},
\end{equation}
 where  $v^\star_{mn}(x,y)$ is a $(\alpha,\beta)$-compatible bivariate polynomial of degree $(m,n)$;
\item[(ii)] $\cD^\star$ is $(\alpha,\beta)$-compatible.
\end{itemize}
\end{defi}

In this paper, the following result will be useful for proving the bivariate $Q$-polynomial property of an association scheme.
\begin{prop}\cite{BCPVZ}\label{pro:q1} Let $\cZ$ be a symmetric association scheme with idempotents $E_{ij}$, for $(i,j)\in \cD^\star \subset \NN^2$. The following items are equivalent:
\begin{itemize}
 \item[(i)] $\cZ$ is a bivariate $Q$-polynomial association scheme of type $(\alpha,\beta)$ on $\cD^\star$;
  \item[(ii)] $\cD^\star$ is $(\alpha,\beta)$-compatible and the Krein parameters satisfy, for  $(i,j),(i+1,j) \in \cD^\star$,
  \begin{eqnarray}
   && q_{10,ij}^{i+ 1 , j}\neq 0,\ \  q_{10,i+1j}^{i , j}\neq 0\,, \label{eqq01} \\
   &&\left( q_{10,ij}^{mn}\neq 0\right)\quad \Rightarrow \quad   (m,n)\preceq_{(\alpha,\beta)} (i+1,j)\ \ \text{and} \ \ (i,j)\preceq_{(\alpha,\beta)} (m+1,n)\,,\label{eqq03}
  \end{eqnarray}
  and, for  $(i,j),(i,j+1) \in \cD^\star$,
    \begin{eqnarray}
   && q_{01,ij}^{i, j+1}\neq 0,\ \ q_{01,ij+1}^{i , j}\neq 0\,,  \label{eqq04}\\
 &&\left( q_{01,ij}^{mn}\neq 0\right)\quad \Rightarrow \quad   (m,n)\preceq_{(\alpha,\beta)} (i,j+1)\ \ \text{and} \ \ (i,j)\preceq_{(\alpha,\beta)} (m,n+1)\,; \label{eqq02}
  \end{eqnarray}
    \item[(iii)] $\cD^\star$ is $(\alpha,\beta)$-compatible and the dual eigenvalues $q_{ij}(mn) $ defined by \eqref{eq:EA} satisfy
 \begin{equation}
  q_{ij}(mn)=v^\star_{ij}( \theta^\star_{mn},  \mu^\star_{mn})\,, \label{eq:pv2}
 \end{equation}
 where $\theta^\star_{mn}=q_{10}(mn)$ and $\mu^\star_{mn}=q_{01}(mn)$, and $v^\star_{ij}(x,y)$ is a $(\alpha,\beta)$-compatible bivariate polynomial  of degree $(i,j)$.
\end{itemize}
\end{prop}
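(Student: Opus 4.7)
The plan is to prove (i) $\Leftrightarrow$ (iii) directly from a Hadamard-orthogonality observation, and then close the triangle by deriving (i) $\Rightarrow$ (ii) and (ii) $\Rightarrow$ (iii).

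The starting observation is that the adjacency matrices $A_{ij}$ are orthogonal idempotents for the Hadamard product: their $0/1$ entries have pairwise disjoint supports, so $A_{ij}\circ A_{k\ell}=\delta_{(i,j),(k,\ell)}A_{ij}$. Plugging the inverse formula $\bv\,E_{mn}=\sum_{(i,j)\in\cD}q_{mn}(ij)A_{ij}$ into an arbitrary polynomial $f$ under the Hadamard product therefore gives
$$f(\bv\,E_{10},\bv\,E_{01})=\sum_{(i,j)\in\cD}f\bigl(q_{10}(ij),q_{01}(ij)\bigr)\,A_{ij},$$
so the matrix identity $\bv\,E_{mn}=v^\star_{mn}(\bv\,E_{10},\bv\,E_{01})$ collapses, by linear independence of the $A_{ij}$, to the pointwise identity $q_{mn}(ij)=v^\star_{mn}(\theta^\star_{ij},\mu^\star_{ij})$ for every $(i,j)\in\cD$. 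Since the $(\alpha,\beta)$-compatibility hypotheses are shared between (i) and (iii), this proves (i) $\Leftrightarrow$ (iii).

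For (i) $\Rightarrow$ (ii), compare
$$\bv\,E_{10}\circ\bv\,E_{ij}=\sum_{(m,n)\in\cD^\star}q_{10,ij}^{mn}\,\bv\,E_{mn}\qquad\text{with}\qquad \bv\,E_{10}\circ v^\star_{ij}(\bv\,E_{10},\bv\,E_{01})=(xv^\star_{ij})(\bv\,E_{10},\bv\,E_{01}).$$
Because $v^\star_{ij}$ is $(\alpha,\beta)$-compatible of degree $(i,j)$, the polynomial $xv^\star_{ij}(x,y)$ has degree $(i+1,j)$, hence expands in the basis $\{v^\star_{mn}\}_{(m,n)\in\cD^\star}$ with nonzero coefficient on $v^\star_{i+1,j}$ and other contributions only from indices $(m,n)\preceq_{(\alpha,\beta)}(i+1,j)$. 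Using linear independence of the $\bv\,E_{mn}$ to identify this expansion with the Krein expansion yields $q_{10,ij}^{i+1,j}\neq0$ and the first halves of \eqref{eqq01}--\eqref{eqq03}. The remaining nonvanishing $q_{10,i+1,j}^{ij}\neq0$ and the second-half implication $(i,j)\preceq_{(\alpha,\beta)}(m+1,n)$ follow from the symmetry of Krein parameters in their lower indices combined with the same degree bound applied with the roles of $(i,j)$ and $(m,n)$ interchanged; the pair \eqref{eqq04}--\eqref{eqq02} is obtained by repeating the argument with $\bv\,E_{01}$ in place of $\bv\,E_{10}$.

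For (ii) $\Rightarrow$ (iii), I would construct the polynomials $v^\star_{mn}(x,y)$ recursively along the total order \textit{deg-lex} (which is a linear extension of $\preceq_{(\alpha,\beta)}$), starting from $v^\star_{00}=1$, consistent with $\bv\,E_{00}=\JJ$ being the Hadamard identity. Comparing $A_{ij}$-coefficients in $\bv\,E_{10}\circ\bv\,E_{pq}=\sum q_{10,pq}^{mn}\bv\,E_{mn}$ gives the scalar identity
$$\theta^\star_{ij}\,q_{pq}(ij)=\sum_{(m,n)\in\cD^\star}q_{10,pq}^{mn}\,q_{mn}(ij),$$
and isolating the $(p+1,q)$-term using $q_{10,pq}^{p+1,q}\neq0$ produces the definition
$$v^\star_{p+1,q}(x,y)=\frac{1}{q_{10,pq}^{p+1,q}}\Bigl(xv^\star_{pq}(x,y)-\sum_{(m,n)\prec_{(\alpha,\beta)}(p+1,q)}q_{10,pq}^{mn}\,v^\star_{mn}(x,y)\Bigr),$$
which is $(\alpha,\beta)$-compatible of degree $(p+1,q)$ by \eqref{eqq03} and, by induction, satisfies $v^\star_{p+1,q}(\theta^\star_{ij},\mu^\star_{ij})=q_{p+1,q}(ij)$; the analogous formula in $y$ handles the $(p,q+1)$-step from \eqref{eqq04}--\eqref{eqq02}. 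The step I expect to be the main obstacle is the consistency check: when a point $(p,q)$ with $p,q\geq1$ is reachable by both an $x$-step from $(p-1,q)$ and a $y$-step from $(p,q-1)$, the two constructions of $v^\star_{p,q}$ must agree. This follows from evaluating $\bv\,E_{10}\circ\bv\,E_{01}\circ\bv\,E_{p-1,q-1}$ in two orders and invoking the commutativity and associativity of the Hadamard product along with the symmetry of the Krein parameters; the two-sided form of the $(\alpha,\beta)$-compatibility conditions \eqref{eqq03} and \eqref{eqq02} is exactly what controls the degrees appearing in this consistency computation.
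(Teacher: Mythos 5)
The paper does not actually prove this proposition---it is quoted from \cite{BCPVZ}---so there is no internal proof to compare against; judged on its own, your argument is essentially the standard duality argument and is correct in outline. The Hadamard-orthogonality observation $A_{ij}\circ A_{k\ell}=\delta_{(i,j),(k,\ell)}A_{ij}$ does reduce (i) $\Leftrightarrow$ (iii) to the pointwise identity on dual eigenvalues, and the two remaining implications go through as you describe. Two points deserve more care. First, in (i) $\Rightarrow$ (ii) you assert that $xv^\star_{ij}$ ``expands in the basis $\{v^\star_{mn}\}_{(m,n)\in\cD^\star}$ with contributions only from indices $(m,n)\preceq_{(\alpha,\beta)}(i+1,j)$''; this is a genuine polynomial identity only because each $v^\star_{mn}$ is compatible of degree $(m,n)$ with all of its monomial exponents lying in the compatible set $\cD^\star$, so that the $v^\star_{mn}$ form a triangular basis of $\mathrm{span}\{x^ay^b\,:\,(a,b)\in\cD^\star\}$, and because $xv^\star_{ij}$ lies in that span when $(i+1,j)\in\cD^\star$. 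Without this remark one only obtains an identity of functions on the joint dual spectrum $\{(\theta^\star_{ab},\mu^\star_{ab})\}$, which is not enough to read off which Krein parameters vanish; the identification of the expansion coefficients with the Krein parameters then uses the linear independence of the $E_{mn}$, as you say. Second, the ``main obstacle'' you flag in (ii) $\Rightarrow$ (iii)---consistency of the $x$-step and $y$-step constructions of $v^\star_{pq}$---is not actually needed: item (iii) only asks for the existence of \emph{some} $(\alpha,\beta)$-compatible polynomial of degree $(p,q)$ reproducing the dual eigenvalues, and either construction supplies one, since each is obtained by solving an exact scalar identity for $q_{pq}(ab)$. What does need to be said is that every point of $\cD^\star$ is reachable by such steps, which follows from $(p-1,q)\preceq_{(\alpha,\beta)}(p,q)$ and $(p,q-1)\preceq_{(\alpha,\beta)}(p,q)$ together with the $(\alpha,\beta)$-compatibility of $\cD^\star$, and that deg-lex indeed refines $\preceq_{(\alpha,\beta)}$ for $0\leq\alpha\leq 1$, $0\leq\beta<1$ (a short computation), so the recursion is well founded.
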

Item (iii) of Proposition \ref{pro:q1} shows that we can use the dual eigenvalues of an association scheme to prove its bivariate $Q$-polynomial property, and item (ii) justifies that we can study the recurrence relations of the polynomials $v_{ij}^\star$ expressing the dual eigenvalues.

In this paper, the two relevant types of bivariate association schemes will be $(\alpha,\beta) = (1,0)$ for the $P$-polynomial property and $(\alpha,\beta) = (0,\frac{1}{2})$ for the $Q$-polynomial property. Figure \ref{fig:recu} illustrates some constraints on the nonzero intersection numbers or Krein parameters for these types of compatibility.

\begin{figure}[hbtp]
\begin{center}
\begin{subfigure}[b]{0.2\textwidth}
    \centering
    \begin{tikzpicture}[scale=0.4]
\draw[->] (0,0)--(7,0);\draw[->] (0,0)--(0,7);
\draw [fill] (2,4) circle (0.07);
\draw [fill] (3,4) circle (0.07);
\draw [fill] (1,4) circle (0.07);
\draw (0,7) node[above] {$n$};
\draw (7,0) node[above] {$m$};
\draw (2,0) node[below] {$i$};
\draw (0,4) node[left] {$j$};
\draw[dashed,thin] (2,0)--(2,4) --(0,4);
\end{tikzpicture}
\caption{}
\label{fig:recp10}
\end{subfigure}%
\hfill
\begin{subfigure}[b]{0.2\textwidth}
    \centering
    \begin{tikzpicture}[scale=0.4]
\draw[->] (0,0)--(7,0);\draw[->] (0,0)--(0,7);
\draw [fill] (2,4) circle (0.07);
\draw [fill] (2,3) circle (0.07);

\draw [fill] (3,4) circle (0.07);

\draw [fill] (1,4) circle (0.07);

\draw [fill] (2,5) circle (0.07);
\draw [fill] (3,3) circle (0.07);
\draw [fill] (1,5) circle (0.07);
\draw [fill] (4,3) circle (0.07);
\draw [fill] (0,5) circle (0.07);
\draw (0,7) node[above] {$n$};
\draw (7,0) node[above] {$m$};
\draw (2,0) node[below] {$i$};
\draw (0,4) node[left] {$j$};
\draw[dashed,thin] (2,0)--(2,4) --(0,4);
\end{tikzpicture}
\caption{}
\label{fig:recp01}
\end{subfigure}%
\hfill
\begin{subfigure}[b]{0.2\textwidth}
    \centering
    \begin{tikzpicture}[scale=0.4]
\draw[->] (0,0)--(7,0);\draw[->] (0,0)--(0,7);
\draw [fill] (2,4) circle (0.07);
\draw [fill] (1,4) circle (0.07);
\draw [fill] (3,4) circle (0.07);
\draw [fill] (1,5) circle (0.07);
\draw [fill] (3,3) circle (0.07);
\draw (0,7) node[above] {$n$};
\draw (7,0) node[above] {$m$};
\draw (2,0) node[below] {$i$};
\draw (0,4) node[left] {$j$};
\draw[dashed,thin] (2,0)--(2,4) --(0,4);
\end{tikzpicture}
\caption{}
\label{fig:recq10}
\end{subfigure}%
\hfill
\begin{subfigure}[b]{0.2\textwidth}
    \centering
    \begin{tikzpicture}[scale=0.4]
\draw[->] (0,0)--(7,0);\draw[->] (0,0)--(0,7);
\draw [fill] (2,4) circle (0.07);
\draw [fill] (2,5) circle (0.07);
\draw [fill] (2,3) circle (0.07);
\draw (0,7) node[above] {$n$};
\draw (7,0) node[above] {$m$};
\draw (2,0) node[below] {$i$};
\draw (0,4) node[left] {$j$};
\draw[dashed,thin] (2,0)--(2,4) --(0,4);
\end{tikzpicture}
\caption{}
\label{fig:recq01}
\end{subfigure}
\end{center}

\caption{The coordinates $(m,n)$ of the dots in the graphs represent when (a) $p_{10,ij}^{mn}$ and (b) $p_{01,ij}^{mn}$ may be non-zero for $(\alpha,\beta)=(1,0)$, (c) $q_{10,ij}^{mn}$ and (d) $q_{01,ij}^{mn}$ may be non-zero for $(\alpha,\beta)=(0,\frac{1}{2})$. \label{fig:recu}  }
\end{figure}
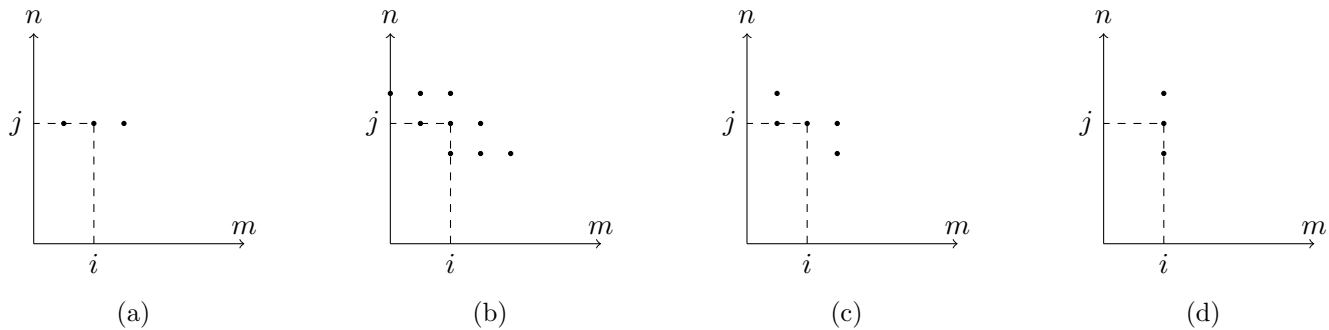

\section{Non-binary Johnson association scheme} \label{sec:nbJAS}

The non-binary Johnson scheme is a generalization of the Johnson scheme whose eigenvalues can be expressed in terms of bivariate polynomials 
formed of Krawtchouk and Hahn polynomials \cite{Dun,TAG}. 

We start by recalling the definition of the non-binary Johnson scheme, which can be found in \cite{TAG}. 
Let $K=\{0,1,2,\dots,r-1\}$, where $r$ is an integer greater than $1$, and consider the $n$-fold Cartesian product $K^n$, where $n$ is a positive integer. 
For a vector $\bx$ in $K^n$ with components $\bx_i$, the weight $w(\bx)$ is defined as the number of non-zero components of $\bx$, that is
\begin{equation}
	w(\bx) = \big|\{ i \ | \ \bx_i \neq 0  \}\big|. \label{eq:weight}
\end{equation}   
For two vectors $\bx,\by \in K^n$, the number of equal non-zero components $e(\bx,\by)$ and the number of common non-zeros $c(\bx,\by)$ are also defined:
\begin{align}
	e(\bx,\by) = \big|\{ i \ | \ \bx_i=\by_i \neq 0  \}\big|, \quad c(\bx,\by) = \big|\{ i \ | \ \bx_i\neq 0, \ \by_i\neq 0 \}\big|. \label{eq:ecnonzeros}
\end{align}
Consider a fixed weight number $k$. Note that we must have $0 \leq k \leq n$ by definition \eqref{eq:weight}. The set
\begin{equation}
	S = \{ \bx \in K^n \ | \ w(\bx)=k \} \,, \label{eq:setXnbJ}
\end{equation} 
together with all the non-empty relations\footnote{The relations $R_{ij}$ of \cite{TAG} have been relabeled as follows: $i \mapsto i+j$ and $j \mapsto j$.}
\begin{equation}
	R_{ij} = \{ (\bx,\by) \in S^2 \ | \ e(\bx,\by)=k-i-j, \ c(\bx,\by)=k-j \}\,, \label{eq:RijNBJ}
\end{equation}    
define a symmetric association scheme called the non-binary Johnson scheme and  denoted $J_r(k,n)$, following the notation of \cite{TAG}. 
From this definition, one can construct the adjacency matrices $A_{ij}$ of the non-binary Johnson scheme. 
These are $|S|\times |S|$ matrices whose entries, labeled by the couples $(\bx,\by)\in S^2$, take the value one if $(\bx,\by) \in R_{ij}$ and zero otherwise.
In particular, for $i=j=0$, the adjacency matrix $A_{00}$ is the identity matrix since $(\bx,\by) \in R_{00}$ if and only if $\bx=\by$. 

When $r=2$, it is seen that $J_2(k,n)$ is the usual Johnson scheme $J(n,k)$ (see \textit{e.g.}\ \cite{BI}) which is a univariate $P$- and $Q$-polynomial association scheme. In what follows, we will suppose $r\geq 3$.

Using only the definition of the scheme, it is possible to compute the following relations between the adjacency matrices:
\begin{align}
	A_{10}A_{ij} =& (k-i-j+1)(r-2) A_{i-1,j} + \left( i(r-3)+j(r-2) \right) A_{ij} +(i+1)A_{i+1,j}, \label{eq:rec10NBJ} \\
	A_{01}A_{ij} =& (k-i-j+1)(r-2)j A_{i-1,j} + (k-i-j+1)(n-k-j+1)(r-1)A_{i,j-1} \nonumber \\ 
	&+ (i+1)j A_{i+1,j} + (j+1)^2 A_{i,j+1} + (i+1)(n-k-j+1)(r-1) A_{i+1,j-1} \nonumber \\
	&+ (j+1)^2(r-2) A_{i-1,j+1} +j\left( k-i-j+(r-2)i+(n-k-j)(r-1)\right)A_{i,j}. \label{eq:rec01NBJ}
\end{align} 

From the relations \eqref{eq:rec10NBJ} and \eqref{eq:rec01NBJ}, we can deduce (see Proposition 2.6 in \cite{BCPVZ}):
\begin{prop}\cite{BCPVZ}
 The non-binary Johnson scheme $J_r(k,n)$ is a bivariate $P$-polynomial association scheme on $\cD=\{(a,b)\in \NN^2\, | \, a+b \leq k, \ b \leq k,n-k\}$ of type $(1,0)$.
\end{prop}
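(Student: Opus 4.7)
The plan is to verify the two conditions of Definition \ref{def:bi} for $(\alpha,\beta)=(1,0)$ using only the recurrences \eqref{eq:rec10NBJ} and \eqref{eq:rec01NBJ} as input. For the compatibility of $\cD$, note that the partial order $\preceq_{(1,0)}$ reads $(m,n)\preceq_{(1,0)}(i,j)$ if and only if $m+n\leq i+j$ and $n\leq j$; so if $(i,j)\in\cD$ and $(m,n)\preceq_{(1,0)}(i,j)$, then $m+n\leq i+j\leq k$ and $n\leq j\leq\min(k,n-k)$, giving $(m,n)\in\cD$.

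For the polynomial construction, I first handle the base case $j=0$. Setting $j=0$ in \eqref{eq:rec10NBJ} yields a genuine three-term recurrence in $i$ that can be solved for $A_{i+1,0}$; starting from $A_{0,0}=\II$, induction on $i$ produces $A_{i,0}$ as a polynomial of degree $i$ in $A_{10}$ alone with nonzero leading coefficient, hence a $(1,0)$-compatible polynomial of degree $(i,0)$ in $(A_{10},A_{01})$.

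For the inductive step on $j$, assume $A_{mn}=v_{mn}(A_{10},A_{01})$ with $v_{mn}$ a $(1,0)$-compatible polynomial of degree $(m,n)$ for all $(m,n)\in\cD$ with $n\leq j$. Solving \eqref{eq:rec01NBJ} for $A_{i,j+1}$ gives
\[
(j+1)^2A_{i,j+1}=A_{01}A_{ij}-(j+1)^2(r-2)A_{i-1,j+1}-\Sigma_{ij},
\]
where $\Sigma_{ij}$ is a linear combination of $A_{i-1,j}, A_{i,j-1}, A_{i+1,j}, A_{i+1,j-1}$ and $A_{ij}$, all known by the outer induction. A nested inner induction on $i$, starting at $i=0$ with the convention $A_{-1,j+1}=0$, then determines $A_{i,j+1}$: at each step $A_{i-1,j+1}$ was obtained in the previous inner step. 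Multiplication by $A_{01}$ shifts the $(1,0)$-degree by $(0,1)$, so $A_{01}A_{ij}$ is $(1,0)$-compatible of degree $(i,j+1)$, while every other index $(m,n)$ on the right satisfies either $n\leq j$ or $(m,n)=(i-1,j+1)$, and in both cases $(m,n)\prec_{(1,0)}(i,j+1)$; dividing by $(j+1)^2\neq 0$ thus yields a $(1,0)$-compatible polynomial $v_{i,j+1}$ of degree $(i,j+1)$ with nonzero leading coefficient.

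The main obstacle is the simultaneous appearance of $A_{i,j+1}$ and $A_{i-1,j+1}$ in \eqref{eq:rec01NBJ}, which prevents a naive single induction on $j$; this is resolved by the nested inner induction on $i$ described above. The remainder of the argument is routine but careful bookkeeping of $(1,0)$-degrees.
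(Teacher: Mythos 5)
Your argument is correct. Note, however, that the paper does not actually prove this proposition: it is quoted from \cite{BCPVZ} (Proposition 2.6 there), and the intended deduction from \eqref{eq:rec10NBJ} and \eqref{eq:rec01NBJ} goes through the characterization of bivariate $P$-polynomial schemes in terms of vanishing and non-vanishing conditions on the intersection numbers $p_{10,ij}^{mn}$ and $p_{01,ij}^{mn}$ (the $P$-analogue of item (ii) of Proposition \ref{pro:q1}, illustrated in Figures \ref{fig:recp10} and \ref{fig:recp01}): one simply reads off from the two product formulas that the nonzero intersection numbers lie in the allowed regions and that the ``leading'' ones, $(i+1)$ and $(j+1)^2$, do not vanish on $\cD$. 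What you do instead is carry out the explicit inductive construction of the polynomials $v_{ij}$, which amounts to re-proving the relevant direction of that characterization in this special case. Your treatment of the one genuine subtlety --- the simultaneous appearance of $A_{i,j+1}$ and $A_{i-1,j+1}$ in \eqref{eq:rec01NBJ}, resolved by the inner induction on $i$ with $A_{-1,j+1}=0$ --- is exactly right, and your degree bookkeeping (in particular that $(i+1,j)\preceq_{(1,0)}(i,j+1)$ and that the leading monomial $x^iy^{j+1}$ cannot be cancelled by any lower-order term) is sound. The citation route is shorter once the general theorem is available; your route is self-contained and makes visible why the coefficients $(i+1)$ and $(j+1)^2$ are the ones that must not vanish. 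A very minor omission: you take for granted that the relations $R_{ij}$ are non-empty exactly for $(i,j)\in\cD$ (so that the relabeling in item (i) of Definition \ref{def:bi} is a bijection), and that the recurrences hold with the convention that out-of-range adjacency matrices are zero; both are part of the setup in Section \ref{sec:nbJAS} rather than of the polynomial argument.
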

Note that if $k\leq n-k$, then the domain $\cD \subseteq \mathbb{N}^2$ of the couples $(i,j)$ for the adjacency matrices $A_{ij}$ is the triangle $i+j \leq k$, while if $n-k < k$, the domain is the same triangle 
truncated horizontally at $j=n-k$. In both cases, the domain $\cD$ is $(1,0)$-compatible.

An explicit expression of the eigenvalues $p_{ij}(x,y)$ is also known \cite{TAG} and is given in terms of 
bivariate polynomials\footnote{The following change of variables has been applied to the eigenvalues given in \cite{TAG} in order to fit with our conventions: $i \mapsto i+j, \ x\mapsto x+y, \ y\mapsto x$. The definitions of the polynomials have also been changed: $K_i(N,p,x)\mapsto K_i(x,N,p)$ and $E_i(N,p,x)\mapsto E_i(x,N,p)$.}:
\begin{equation}
	p_{ij}(x,y) = (r-1)^jK_{i}(x,k-j,r-1)E_{j}(y,n-x,k-x), \label{eq:pijNBJ}
\end{equation}
where $(i,j),(x,y) \in \{(a,b)\in \NN^2\, | \, a+b \leq k, \ b \leq k,n-k\}$ and for $i=0,1,\dots,N$,
\begin{align}
	&K_i(x,N,p) = \sum_{\ell=0}^i(-1)^\ell (p-1)^{i-\ell} {x\choose \ell} {N-x\choose i-\ell}, \label{eq:krawtchouk} \\
	&E_i(x,N,p) = \sum_{\ell=0}^i (-1)^\ell {x\choose \ell} {p-x\choose i-\ell} {N-p-x\choose i-\ell}
	= \sum_{\ell=0}^{i} (-1)^{i+\ell} \binom{p-\ell}{p-i} 
			\binom{p-x}{\ell} \binom{N-p-x+\ell}{\ell}.\hspace{3cm}  \label{eq:eberlein}
\end{align}
Polynomial \eqref{eq:krawtchouk} is the Krawtchouk polynomial and \eqref{eq:eberlein} is the Eberlein polynomial (see Appendix \ref{app:A} for more details).  

The polynomials $v_{ij}(x,y)$ can be obtained by 
\begin{equation}\label{eq:defv}
p_{ij}(xy)=v_{ij}( \theta_{xy},\mu_{xy})
\end{equation}
 and they are $(1,0)$-compatible bivariate polynomials of degree $(i, j)$ where
\begin{subequations} \label{eq:tm}
    \begin{align}
        &\theta_{xy} = p_{10}(xy) = k(r-2)-x(r-1)  \,,\label{eq:theta}\\
        &\mu_{xy} = p_{01}(xy) = (r-1) ((k - x - y) (n-k-y)-y)\,. \label{eq:mu}
    \end{align}
\end{subequations}

\section{Bivariate $Q$-polynomial structure} \label{sec:biQpoly}

In this section, 
we show that when $n\geq 2k-1$, the non-binary Johnson scheme $J_r(k,n)$ is a bivariate $Q$-polynomial association scheme on $\cD^*=\{(a,b)\in \NN^2\, | \, a+b \leq k, \ b \leq k,n-k\}$ of type $(0,\frac12)$.  
We prove this by using Proposition \ref{pro:q1}.
Indeed, the dual eigenvalues are known \cite{TAG} and are given explicitly in terms of bivariate polynomials:
\begin{equation}
	q_{ij}(x,y) = \frac{\binom{n}{i}}{\binom{k}{i}}K_{i}(x,k-y,r-1)H_{j}(y,n-i,k-i), \label{eq:dualeigen}
\end{equation}
where $(i,j),(x,y) \in \{(a,b)\in \NN^2\, | \, a+b \leq k, \ b \leq k,n-k\}$ and, for $i=0,1,\dots,N$,
\begin{align}
	&H_i(x,N,p) =  \frac{\binom{N}{i}-\binom{N}{i-1}}{\binom{p}{x}\binom{N-p}{x}}E_x(i,N,p). \label{eq:hahn}
\end{align}
Polynomial \eqref{eq:hahn}  is the Hahn polynomial.
Let us emphasize that the following change of variables has been applied to the eigenvalues given in \cite{TAG}: $x \mapsto x+y, i\mapsto i+j, j\mapsto i$. With these variables, we have $\cD=\cD^\star$. The definition of the Hahn polynomial has also been changed: $H_i(N,p,x)\mapsto H_i(x,N,p)$. We will show that the polynomials $v^*_{ij}(x,y)$ defined by 
\begin{equation}\label{def:vs}
q_{ij}(xy)=v^\star_{ij}( \theta^\star_{xy},\mu^\star_{xy})
\end{equation}
 are $(0,\frac12)$-compatible bivariate polynomials of degree $(i, j)$ where
 \begin{subequations} \label{eq:tms}
 \begin{align}
  &\theta^\star_{xy}=q_{10}(xy)=\frac{n}{k}\Big((r-2)(k-y)-x(r-1)\Big)\,, \label{eq:thetas}\\
  &\mu^\star_{xy}=q_{01}(xy)=(n-1)\left(1-\frac{n}{k(n-k)}y\right)\,. \label{eq:mus}
 \end{align}
\end{subequations}

We first prove the following lemma allowing to express the Hahn polynomial $H_r(x,N,p)$ as a linear combination of $H_s(x,N-1,p-1)$'s, for some $s$ determined by $r$.
\begin{lem}\label{lem:hahnparach}
The following relation between Hahn polynomials holds 
\begin{equation*}
	H_r(x,N,p)=\frac Np \Bigg( \frac{p-r}{N-2r}H_r(x,N-1,p-1)+\frac{N-p-r+1}{N-2r+2}			H_{r-1}(x,N-1,p-1) \Bigg).
\end{equation*}
\end{lem}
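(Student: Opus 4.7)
The plan is to unfold the definition $H_r(x,N,p) = \frac{\binom{N}{r}-\binom{N}{r-1}}{\binom{p}{x}\binom{N-p}{x}}\,E_x(r,N,p)$ on both sides of the claimed identity and reduce it to an equivalent statement on Eberlein polynomials. Using $\binom{N}{r}-\binom{N}{r-1} = \binom{N}{r}\tfrac{N-2r+1}{N-r+1}$, one verifies the routine simplifications $\tfrac{\binom{N-1}{r}-\binom{N-1}{r-1}}{N-2r} = \tfrac{\binom{N}{r}}{N}$ and $\tfrac{\binom{N-1}{r-1}-\binom{N-1}{r-2}}{N-2r+2} = \tfrac{\binom{N}{r-1}}{N}$, together with $\binom{p-1}{x}/\binom{p}{x} = (p-x)/p$. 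After clearing denominators and the common factor $\binom{N-p}{x}$, the claim becomes equivalent to the Eberlein identity
\begin{equation*}
(p-x)\bigl[\tbinom{N}{r}-\tbinom{N}{r-1}\bigr]\,E_x(r,N,p) = (p-r)\tbinom{N}{r}\,E_x(r,N-1,p-1) + (N-p-r+1)\tbinom{N}{r-1}\,E_x(r-1,N-1,p-1).
\end{equation*}

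Next, I would use the first sum formula for $E_x(r,N,p)$ in \eqref{eq:eberlein} and split the scalar prefactor as $p-x = (p-r-x+\ell) + (r-\ell)$ inside the summation. The two elementary identities $(p-r-x+\ell)\binom{p-r}{x-\ell} = (p-r)\binom{p-r-1}{x-\ell}$ and $(r-\ell)\binom{r}{\ell} = r\binom{r-1}{\ell}$ then give
\begin{equation*}
(p-x)\,E_x(r,N,p) = (p-r)\,E_x(r,N-1,p-1) + r\,F, \qquad F := \sum_{\ell} (-1)^\ell \tbinom{r-1}{\ell}\tbinom{p-r}{x-\ell}\tbinom{N-p-r}{x-\ell}.
\end{equation*}
Combined with the identity $r\bigl[\binom{N}{r}-\binom{N}{r-1}\bigr] = \binom{N}{r-1}(N-2r+1)$, the displayed Eberlein identity above reduces (after cancelling the common factor $\binom{N}{r-1}$) to
\begin{equation*}
(N-2r+1)\,F = (p-r)\,E_x(r,N-1,p-1) + (N-p-r+1)\,E_x(r-1,N-1,p-1).
\end{equation*}

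To finish, I would expand the two $E_x$'s on the right by the same sum formula, write $\binom{r}{\ell} = \binom{r-1}{\ell}+\binom{r-1}{\ell-1}$ and re-index the $\binom{r-1}{\ell-1}$ contribution, so that every summand can be compared to the one in $F$. With the shorthand $a=p-r$, $b=N-p-r$, $m=x-\ell$ (and thus $N-2r+1 = a+b+1$), the term-by-term identity collapses to the purely combinatorial statement
\begin{equation*}
(a+b+1)\tbinom{a}{m}\tbinom{b}{m} = (b+1)\tbinom{a}{m}\tbinom{b+1}{m} + a\tbinom{a-1}{m}\tbinom{b}{m} - a\tbinom{a-1}{m-1}\tbinom{b}{m-1},
\end{equation*}
which follows from Pascal's rule together with the elementary relation $m\binom{b}{m} = (b+1-m)\binom{b}{m-1}$. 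The main obstacle is the bookkeeping of the two successive Pascal-type splittings, since one must track the index shifts carefully so that everything gathers under a single summation; once this is done, the remaining identity is a two-line verification.
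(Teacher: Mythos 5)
Your proof is correct: I checked the reduction to the Eberlein identity, the splitting $(p-x)E_x(r,N,p)=(p-r)E_x(r,N-1,p-1)+rF$, and the final single-term identity, and all steps go through. The overall strategy (unfold the definition of $H_r$ and manipulate the binomial summands directly) is the same as the paper's, but the decomposition is genuinely different. The paper works with the \emph{second} sum representation of the Eberlein polynomial in \eqref{eq:eberlein} and performs a one-shot splitting of the summand factor, namely $(p-\ell)\tfrac{N-2r+1}{N-r+1}=(p-r-\ell)+\tfrac{(N-p-r+\ell+1)r}{N-r+1}$, which directly produces the two summands of $H_r(x,N-1,p-1)$ and $H_{r-1}(x,N-1,p-1)$ after absorbing the factors into shifted binomial coefficients; this is compact but the intermediate manipulations of $\binom{N-p-r+\ell}{\ell}$-type factors are somewhat opaque. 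Your version uses the \emph{first} sum representation, splits $p-x=(p-r-x+\ell)+(r-\ell)$, and routes everything through the auxiliary sum $F$, ending with the transparent Pascal-type identity $(a+b+1)\binom{a}{m}\binom{b}{m}=(b+1)\binom{a}{m}\binom{b+1}{m}+a\binom{a-1}{m}\binom{b}{m}-a\binom{a-1}{m-1}\binom{b}{m-1}$. Your route costs one extra layer of bookkeeping (the Pascal splitting of $\binom{r}{\ell}$ and the reindexation) but buys a cleaner endpoint, since the final identity is elementary and each intermediate object ($F$ and the two shifted Eberlein sums) is explicitly identified. Both proofs share the same potential degeneracy at $N=2r$ or $N=2r-2$, which is inherent to the statement itself and not a defect of your argument.
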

\begin{proof}
Using definitions \eqref{eq:eberlein} and \eqref{eq:hahn} and after some manipulations of the binomial coefficients, one gets 
\begin{align}
	H_r(x,N,p)  
 &= \frac{\binom{N}{r}-\binom{N}{r-1}}{\binom{p}{x}\binom{N-p}{x}}
		\sum_{\ell=0}^{x} (-1)^{x+\ell} \binom{p-\ell}{p-x} 
			\binom{p-r}{\ell} \binom{N-p-r+\ell}{\ell}  \label{eq:Hrdire}\\
			&=\frac{1}{p 
			\binom{p-1}{x} \binom{N-p}{x}} 	
			\sum_{\ell=0}^{x} (-1)^{x+\ell} 
			(p-\ell)\frac{N-2r+1}{N-r+1}
			\binom{p-1-\ell}{p-1-x} 
			\binom{p-r}{\ell} \binom{N-p-r+\ell}{\ell}\binom{N}{r}. \label{eq:Hrsum}
\end{align} 
Let us focus on the the factor 
$D=(p-\ell)\frac{N-2r+1}{N-r+1}\binom{p-r}{\ell} \binom{N-p-r+\ell}{\ell}\binom{N}{r}$ in each summand and rewrite it as follows:
\begin{align*}  D
			&= 
			\Big(p-r-\ell +\frac{(N-p-r+\ell+1)r}{N-r+1}\Big)
			\binom{p-r}{\ell} \binom{N-p-r+\ell}{\ell}\binom{N}{r} \\
			&=
			(p-r-\ell)\frac{p-r}{p-r-\ell} \binom{p-1-r}{\ell} 
			\binom{N-p-r+\ell}{\ell}\binom{N}{r} \\
			&\quad+ \frac{(N-p-r+\ell+1)r}{N-r+1}\binom{p-r}{\ell} 
			\frac{N-p-(r-1)}{N-p-(r-1)+\ell}\binom{N-p-(r-1)+\ell}{\ell}\binom{N}{r}\\
			&=
			(p-r) \binom{p-1-r}{\ell} 
			\binom{N-p-r+\ell}{\ell}
			\frac{N}{N-r}\binom{N-1}{r} \\
			&\quad+ \frac{(N-p-r+1)r}{N-r+1}\binom{p-r}{\ell} 
			\binom{N-p-r+\ell+1}{\ell}
			\frac{N}{r}\binom{N-1}{r-1}\\
			&=
			(p-r) \binom{p-1-r}{\ell} 
			\binom{N-p-r+\ell}{\ell}
			\frac{N}{N-r} \frac{N-r}{N-2r}
			\bigg(\binom{N-1}{r}-\binom{N-1}{r-1} \bigg) \\
			&\quad+ \frac{(N-p-r+1)r}{N-r+1}\binom{p-r}{\ell} 
			\binom{N-p-r+\ell+1}{\ell}
			\frac{N}{r}\frac{N-r+1}{N-2r+2}
			\bigg(\binom{N-1}{r-1}-\binom{N-1}{r-2}\bigg)\\
			&=
			\frac{(p-r)N}{N-2r} \binom{p-1-r}{\ell} 
			\binom{N-p-r+\ell}{\ell}
			\bigg(\binom{N-1}{r}-\binom{N-1}{r-1} \bigg) \\
			&\quad+\frac{(N-p-r+1)N}{N-2r+2}
			\binom{p-r}{\ell} 
			\binom{N-p-r+\ell+1}{\ell}
			\bigg(\binom{N-1}{r-1}-\binom{N-1}{r-2}\bigg).
\end{align*}			
Putting the last expression in \eqref{eq:Hrsum} and comparing with \eqref{eq:Hrdire}, 
we get the desired result.  
\end{proof}

From this technical lemma, the recurrence relation of the dual eigenvalues $q_{ij}(x,y)$ can be obtained.
\begin{prop}\label{lem:recuq}
The dual eigenvalues $q_{ij}(x,y)$ of the non-binary Johnson scheme $J_r(k,n)$ 
satisfy 
\begin{align}
	&\theta^\star_{xy}\, q_{ij}(x,y)= \frac{n(r-3)i}{k} q_{ij}(x,y) 
	+  \frac{n(i+1)(k-i-j)}{k(n-i-2j)}q_{i+1,j}(x,y)
            +\frac{n(i+1)(n-k-j+1)}{k(n-i-2j+2)}q_{i+1,j-1}(x,y)\nonumber\\
   &   \quad     +\frac{n(n-j-k)(j+1)(r-2)}{k(n-i-2j)}q_{i-1,j+1}(x,y)+
	\frac{n(k-i-j+1)(n-i-j+2)(r-2)}{k(n-i-2j+2)}q_{i-1,j}(x,y),
 \label{eq:qrecu1}
\end{align} 
and 
\begin{equation}
\mu^\star_{xy} q_{ij}(x,y)= \wcA_{ij} \, q_{i,j+1}(x,y) + \wcB_{ij}\,  q_{i,j}(x,y) + \wcC_{ij}\,  q_{i,j-1}(x,y), 
 \label{eq:qrecu2}
\end{equation}
where $\theta^\star_{xy}$, $\mu^\star_{xy}$ are given by \eqref{eq:tms} and 
\begin{subequations}\label{eq:recuycoe}
\begin{align}
	&\wcA_{ij} = \frac{(n-1)n(j+k-n)(i+j-k)(j+1)}{k(n-k)(2j+i-n)(2j+i-n+1)}\,, \\
	&\wcB_{ij}= n-1 - \frac{(n-1)n\Big(j^2(n-i)-j(n-i)(n-i+1)+(k-i)(n-i+2)(n-k)\Big)}
	{k(n-k)(2j+i-n-2)(2j+i-n)}\,, \\
	&\wcC_{ij} = - \frac{(n-1)n(j+k-n-1)(i+j-k-1)(i+j-n-2)}{k(n-k)(2j+i-n-2)(2j+i-n-3)}\,.  
\end{align}
\end{subequations}
\end{prop}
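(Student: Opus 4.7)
The proof proceeds by direct computation from the closed form \eqref{eq:dualeigen} of $q_{ij}(x,y)$, combining the three-term recurrences of the Krawtchouk and Hahn polynomials with Lemma \ref{lem:hahnparach}. The two identities are treated separately.

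\emph{Proof of \eqref{eq:qrecu2}.} Because $\mu^\star_{xy}$ depends only on $y$, and because the prefactor $\frac{\binom{n}{i}}{\binom{k}{i}}K_i(x,k-y,r-1)$ is common to $q_{ij}$, $q_{i,j+1}$ and $q_{i,j-1}$, it may be cancelled from both sides of \eqref{eq:qrecu2}. The statement then reduces to the three-term recurrence in $j$ for the Hahn polynomial $H_j(y,n-i,k-i)$ multiplied by a linear function of $y$. I would derive this recurrence from \eqref{eq:eberlein}--\eqref{eq:hahn} (or quote it from the literature), specialize to $N=n-i$ and $p=k-i$, and match the coefficients term by term with $\wcA_{ij}$, $\wcB_{ij}$, $\wcC_{ij}$ and with the form \eqref{eq:mus} of $\mu^\star_{xy}$.

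\emph{Proof of \eqref{eq:qrecu1}.} From the generating function $\sum_i K_i(x,N,p)t^i=(1-t)^x(1+(p-1)t)^{N-x}$ implicit in \eqref{eq:krawtchouk}, a standard manipulation yields the three-term recurrence
\begin{equation*}
 p\,x\,K_i(x,N,p)=-(i+1)K_{i+1}(x,N,p)+\big((p-1)N-(p-2)i\big)K_i(x,N,p)-(p-1)(N-i+1)K_{i-1}(x,N,p).
\end{equation*}
Specializing to $N=k-y$ and $p=r-1$, the combination $\theta^\star_{xy}K_i=\tfrac{n}{k}\big((r-2)(k-y)-(r-1)x\big)K_i$ has its $(k-y)K_i$ pieces cancel, leaving
\begin{equation*}
 \theta^\star_{xy}\,K_i(x,k-y,r-1)=\tfrac{n(i+1)}{k}K_{i+1}+\tfrac{n(r-3)i}{k}K_i+\tfrac{n(r-2)(k-y-i+1)}{k}K_{i-1}.
\end{equation*}
Multiplying by $\tfrac{\binom{n}{i}}{\binom{k}{i}}H_j(y,n-i,k-i)$ and using the identity $\tfrac{\binom{n}{i+1}/\binom{k}{i+1}}{\binom{n}{i}/\binom{k}{i}}=\tfrac{n-i}{k-i}$, the middle summand is instantly recognised as $\tfrac{n(r-3)i}{k}q_{ij}$, and Lemma \ref{lem:hahnparach} applied at $(N,p,r)=(n-i,k-i,j)$ converts the $K_{i+1}$ branch into the two contributions $\tfrac{n(i+1)(k-i-j)}{k(n-i-2j)}q_{i+1,j}$ and $\tfrac{n(i+1)(n-k-j+1)}{k(n-i-2j+2)}q_{i+1,j-1}$, matching \eqref{eq:qrecu1} exactly.

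\emph{The delicate step and main obstacle.} The $K_{i-1}$ branch requires the identity
\begin{equation*}
 (k-y-i+1)\,H_j(y,n-i,k-i)=\alpha\,H_{j+1}(y,n-i+1,k-i+1)+\beta\,H_j(y,n-i+1,k-i+1),
\end{equation*}
expressing $(k-y-i+1)H_j$ in the basis adapted to $q_{i-1,j+1}$ and $q_{i-1,j}$. I would establish it by applying Lemma \ref{lem:hahnparach} at the shifted parameters $(N,p)=(n-i+1,k-i+1)$ with both $r=j+1$ and $r=j$; this gives a $2\times 2$ linear system in $\alpha,\beta$ whose consistency, i.e.\ the vanishing of the spurious $H_{j-1}(y,n-i,k-i)$ term, is equivalent to the three-term recurrence of $H_j$ in $j$ at parameters $(n-i,k-i)$ applied to the linear factor $(k-i+1-y)$. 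The main obstacle is precisely this bookkeeping: one has to simplify the rational expressions in $i,j,k,n$ coming out of the $2\times 2$ system and match them against $\tfrac{n(n-j-k)(j+1)(r-2)}{k(n-i-2j)}$ and $\tfrac{n(k-i-j+1)(n-i-j+2)(r-2)}{k(n-i-2j+2)}$ in \eqref{eq:qrecu1}. Each individual simplification is elementary, but the three building blocks (the Krawtchouk three-term recurrence, Lemma \ref{lem:hahnparach}, and the Hahn three-term recurrence) have to be assembled carefully for the denominators $n-i-2j$ and $n-i-2j+2$ to line up; once this organisation is in place the verification is mechanical.
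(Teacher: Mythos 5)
Your plan is correct and follows essentially the same route as the paper: equation \eqref{eq:qrecu2} via cancelling the Krawtchouk prefactor and invoking the Hahn recurrence \eqref{eq:hahnrecur}, and equation \eqref{eq:qrecu1} via the Krawtchouk recurrence \eqref{eq:krawrecur} followed by Lemma \ref{lem:hahnparach} on the $K_{i+1}$ branch and the combination of the Hahn recurrence with Lemma \ref{lem:hahnparach} on the $K_{i-1}$ branch. The only (cosmetic) difference is that for the $K_{i-1}$ branch you posit the two-term ansatz and solve an overdetermined linear system for $\alpha,\beta$, whereas the paper substitutes the lemma's expressions for $H_{j\pm1}(y,n-i,k-i)$ directly and watches the spurious terms cancel; both amount to the same computation.
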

\begin{proof}
To prove relation \eqref{eq:qrecu2}, replace $\mu^\star_{xy}$ by its value \eqref{eq:mus} to get
\begin{align*}
	\mu^\star_{xy} q_{ij}(x,y)
		&=(n-1)q_{ij}(x,y)+\frac{n(n-1)}{k(n-k)}
		  \frac{\binom{n}{i}}{\binom{k}{i}} 
		  K_i(x,k-y,r-1) (-y)H_j(y,n-i,k-i)\,.
\end{align*}
Using the recurrence relation \eqref{eq:hahnrecur} of $H_i$ given in Appendix \ref{app:A}, one proves \eqref{eq:qrecu2}.

For the other recurrence relation \eqref{eq:qrecu1}, 
one uses the recurrence relation for Krawtchouk polynomial (recalled in \eqref{eq:krawrecur})
to get 
\begin{align}
	\theta^\star_{x,y}q_{ij}(x,y)
	&=\frac{n(r-3)i}{k} q_{ij}(x,y)
	+ \frac{n(i+1)}{k}\frac{\binom{n}{i}}{\binom{k}{i}}K_{i+1}(x,k-y,r-1) H_j(y,n-i,k-i) \nonumber\\
	&\quad+ \frac{n(r-2)}{k} \frac{\binom{n}{i}}{\binom{k}{i}}(k-y-i+1)K_{i-1}(x,k-y,r-1) H_j(y,n-i,k-i)\,. \label{eq:q10prethree}
\end{align}
We now evaluate the last two summands in \eqref{eq:q10prethree}. Let us denote by $F$ and $G$ these two terms multiplied by $\frac{k}{n(i+1)}$ and $\frac{k}{n(r-2)}$, respectively. 
For both terms, 
 we need to change the Hahn polynomial parameters by replacing $i$ with $i-1$ or $i+1$ using Lemma \ref{lem:hahnparach}.
 The term $F$ can be rewritten as follows:
\begin{align}
	F &= \frac{\binom{n}{i}}{\binom{k}{i}}K_{i+1}(x,k-y,r-1) 
            \frac{n-i}{k-i}
            \bigg(
            \frac{k-i-j}{n-i-2j}H_j(y,n-(i+1),k-(i+1)) \nonumber\\
            & \quad +\frac{n-k-j+1}{n-i-2j+2}H_{j-1}(y,n-(i+1),k-(i+1))
            \bigg) \nonumber\\
            &= 
            \frac{k-i-j}{n-i-2j}q_{i+1,j}(x,y)
            +\frac{n-k-j+1}{n-i-2j+2}q_{i+1,j-1}(x,y). \label{eq:F}
\end{align}
To compute $G$, let us use the recurrence relation satisfied by the Hahn polynomials \eqref{eq:hahnrecur}:
\begin{align}
	G
	&=  \frac{\binom{n}{i}}{\binom{k}{i}}(k-i+1)K_{i-1}(x,k-y,r-1) H_j(y,n-i,k-i)
	  +\frac{\binom{n}{i}}{\binom{k}{i}}K_{i-1}(x,k-y,r-1) \nonumber\\
	 & \quad\Big(A_j(n-i,k-i)H_{j+1}(y,n-i,k-i) 
		   +B_j(n-i,k-i)H_j(y,n-i,k-i) \nonumber\\
	& \quad +C_j(n-i,k-i)H_{j-1}(y,n-i,k-i)\Big).   \label{eq:G1}
\end{align}
By letting $r=j, \ x=y, \ N=n-(i-1), \ p=k-(i-1)$ (resp. $r=j+1, \ x=y, \ N=n-(i-1), \ p=k-(i-1)$) in Lemma~\ref{lem:hahnparach}, 
we can express $H_{j-1}(y,n-i,k-i)$ (resp. $H_{j+1}(y,n-i,k-i)$) in \eqref{eq:G1}
in terms of $H_j(y,n-(i-1),k-(i-1))$ and $H_j(y,n-i,k-i)$ (resp. $H_{j+1}(y,n-(i-1),k-(i-1))$ and $H_j(y,n-i,k-i)$). 
Using these expressions,  replacing $A_j$, $B_j$, $C_j$ with \eqref{eq:hanrecur}, and simplifying, one gets 
\begin{align*}
	G
	& =  K_{i-1}(x,k-y,r-1)\frac{\binom{n}{i-1}}{\binom{k}{i-1}}
	\Big(\frac{(n-j-k)(j+1)}{n-i-2j}H_{j+1}(y,n-(i-1),k-(i-1))\nonumber\\
	& \quad+\frac{(k-i-j+1)(n-i-j+2)}{n-i-2j+2}H_{j}(y,n-(i-1),k-(i-1))\Big).
\end{align*}
Using definition  \eqref{eq:dualeigen}, it follows that
\begin{align}
	G
	&= \frac{(n-j-k)(j+1)}{n-i-2j}q_{i-1,j+1}(x,y)+
	\frac{(k-i-j+1)(n-i-j+2)}{n-i-2j+2}q_{i-1,j}(x,y). \label{eq:G}
\end{align}
Now combining equations \eqref{eq:q10prethree}, \eqref{eq:F}, and \eqref{eq:G}, 
we obtain 
the desired result. 
\end{proof}

\begin{thm}
Let $k$ and $n$ be positive integers such that $n\geq 2k-1$, 
then the non-binary Johnson scheme $J_r(k,n)$ is a bivariate $Q$-polynomial association scheme of type $(0,\frac12)$ on the region $\cD^*=\{(a,b)\in \NN^2\, | \, a+b \leq k, \ b \leq k,n-k\}$. 
\end{thm}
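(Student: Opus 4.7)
The plan is to apply item (iii) of Proposition~\ref{pro:q1}, which requires two things: (a) $\cD^\star$ is $(0,\frac{1}{2})$-compatible and (b) the polynomials $v^\star_{ij}(s,t)$ defined by $q_{ij}(xy) = v^\star_{ij}(\theta^\star_{xy},\mu^\star_{xy})$ are $(0,\frac{1}{2})$-compatible bivariate polynomials of degree $(i,j)$.

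For (a), I check that whenever $(p,q) \preceq_{(0,\frac{1}{2})} (i,j)$ with $(i,j) \in \cD^\star$, also $(p,q) \in \cD^\star$. The inequality $p+q \leq i+j \leq k$ follows immediately by combining the defining inequalities $p \leq i$ and $\frac{p}{2}+q \leq \frac{i}{2}+j$. The constraint $q \leq \min(k, n-k)$ is automatic when $n \geq 2k$; for $n = 2k-1$ (so $n-k=k-1$), a direct inspection of boundary points of $\cD^\star$ (most delicately near $j=n-k$) shows that $q \leq \frac{i}{2}+j-\frac{p}{2}$ never exceeds $k-1$, and this is exactly where the hypothesis $n \geq 2k-1$ is needed.

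For (b), I argue by induction using the recurrences of Proposition~\ref{lem:recuq}, read as polynomial identities in the formal variables $(s,t)$. This is legitimate because the map $(x,y) \mapsto (\theta^\star_{xy},\mu^\star_{xy})$ is injective on $\cD$ (since $\mu^\star_{xy}$ is a non-constant affine function of $y$, and $\theta^\star_{xy}$ is then a non-constant affine function of $x$ at fixed $y$) and supplies enough points to specify polynomials of the required degree. Equation \eqref{eq:qrecu2} solves
\[
v^\star_{i,j+1}(s,t) = \frac{1}{\wcA_{ij}}\Big[(t-\wcB_{ij})v^\star_{ij}(s,t) - \wcC_{ij}\, v^\star_{i,j-1}(s,t)\Big],
\]
and \eqref{eq:qrecu1} analogously solves for $v^\star_{i+1,j}$. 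Starting from $v^\star_{00}=1$, $v^\star_{10}=s$, $v^\star_{01}=t$, I induct column by column on $i$: within each column $i$, \eqref{eq:qrecu2} generates $v^\star_{i,j+1}$ from $v^\star_{ij}$ and $v^\star_{i,j-1}$; to pass from column $i-1$ to column $i$, \eqref{eq:qrecu1} evaluated at $(i-1,0)$ yields $v^\star_{i,0}$ in terms of previously computed data. Introducing the weighted valuation $w(s^a t^b) := a + 2b$, which matches $\preceq_{(0,\frac{1}{2})}$, multiplication by $t$ raises $w$ by $2$ without changing the $s$-degree, precisely realising the shift from degree $(i,j)$ to $(i,j+1)$ in the compatible sense; similarly multiplication by $s$ raises $w$ by $1$ and the $s$-degree by $1$, realising the shift to $(i+1,j)$. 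All subtracted terms have strictly smaller $w$, so the new leading monomial $s^i t^{j+1}$ (resp.~$s^{i+1}t^j$) appears with nonzero coefficient equal to the inverse of the corresponding divisor.

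The main obstacle is boundary control: several coefficients in \eqref{eq:qrecu1} and \eqref{eq:recuycoe} carry factors such as $(j+k-n)$, $(i+j-k)$, $(2j+i-n)$ and $(2j+i-n+1)$ which can vanish at the edges of $\cD^\star$, and the recurrence must remain invertible at every inductive step whose target lies in $\cD^\star$. The hypothesis $n \geq 2k-1$ is exactly what simultaneously secures (a) and guarantees that the relevant divisors ($\wcA_{ij}$ in \eqref{eq:qrecu2}, and $\frac{n(i+1)(k-i-j)}{k(n-i-2j)}$ in \eqref{eq:qrecu1}) are nonzero whenever $(i,j+1)$ or $(i+1,j)$ lies in $\cD^\star$; for smaller $n$ both conditions fail near the corner $j=n-k$, $i+j=k$. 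A case-by-case verification of these boundary coefficients completes the argument, and Proposition~\ref{pro:q1}(iii) yields the stated bivariate $Q$-polynomial structure.
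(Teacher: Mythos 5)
Your argument is correct and follows essentially the same route as the paper: both apply Proposition~\ref{pro:q1}(iii), deriving the $(0,\tfrac12)$-compatibility and degree of the $v^\star_{ij}$ from the recurrences of Proposition~\ref{lem:recuq} and checking that $\cD^\star$ is $(0,\tfrac12)$-compatible exactly when $n\geq 2k-1$; you merely make explicit the induction and leading-coefficient checks that the paper compresses into a comparison with Figure~\ref{fig:recu}. The one slight inaccuracy is a side remark: the hypothesis $n\geq 2k-1$ is not what keeps the divisors nonzero, since $\wcA_{ij}$ and the coefficient of $v^\star_{i+1,j}$ vanish precisely when the target index leaves $\cD^\star$ regardless of $n$, so the hypothesis is needed only for the $(0,\tfrac12)$-compatibility of the domain itself.
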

\begin{proof}
Let $v^*_{ij}(x,y)$ be defined by \eqref{def:vs}. 
Then by Proposition \ref{lem:recuq}, 
we have
\begin{multline*}
	xv^*_{ij}(x,y)= \frac{n(r-3)i}{k} v^*_{ij}(x,y) 
	+ \frac{n(i+1)(k-i-j)}{k(n-i-2j)}v^*_{i+1,j}(x,y)
            +\frac{n(i+1)(n-k-j+1)}{k(n-i-2j+2)}v^*_{i+1,j-1}(x,y)\\
           + \frac{n(n-j-k)(j+1)(r-2)}{k(n-i-2j)}v^*_{i-1,j+1}(x,y)+
	\frac{n(k-i-j+1)(n-i-j+2)(r-2)}{k(n-i-2j+2)}v^*_{i-1,j}(x,y), 
\end{multline*} 
and
\begin{equation*}
y v^*_{i,j}(x,y)=
\wcA_{ij}\, v^*_{i,j+1}(x,y) + \wcB_{ij}\, v^*_{i,j}(x,y) + \wcC_{ij}\, v^*_{i,j-1}(x,y),
\end{equation*}
where $\wcA_{ij}$, $\wcB_{ij}$, $\wcC_{ij}$ are as in \eqref{eq:recuycoe}. Comparing the terms which appear in these recurrence relations with the non-zero Krein parameters illustrated on Figure \ref{fig:recq10} and \ref{fig:recq01}, we conclude that $v^*_{ij}(x,y)$ is a $(0,\frac12)$-compatible bivariate polynomial of degree $(i,j)$. Moreover, the domain $\cD^\star$ is $(0,\frac12)$-compatible when $n-k \geq k-1$. 
Now the result follows from Proposition~\ref{pro:q1}.  
\end{proof}

\begin{rem}
    With respect to the definition of multivariate $Q$-polynomial association schemes in \cite{BKZZ}, 
    for this labeling of the principal idempotents $E_{mn}$ to be a bivariate $Q$-polynomial order, 
    we still need the condition $n-k\geq k-1$, even though $\cD^\star$ is not required to be $(\alpha,\beta)$-compatible. As if $n-k<k-1$, some monomial shows up in $v_{ij}(x,y)$ will be out of the region $\cD^\star$ (a requirement in the definitions in \cite{BKZZ}). 
\end{rem}

\begin{rem}
	In \cite{BCPVZ}, 
	the recurrence relation of the bivariate polynomials $v_{ij}(x,y)$ is obtained combinatorially, by calculating $A_{10}A_{ij}$ and $A_{01}A_{ij}$ directly, 
	with results as in \eqref{eq:rec10NBJ} and \eqref{eq:rec01NBJ}. 
	In fact, similar recurrences for $p_{ij}(x,y)$ as the ones for $q_{ij}$ in the Proposition \ref{lem:recuq} can be obtained by using the same proof idea  and the fact that $K_r(x,N,p)=K_r(x,N-1,p)+(p-1)K_{r-1}(x,N-1,p)$. 
    From the recurrences of $p_{ij}$ we can get the recurrences for $v_{ij}$. 
\end{rem}

\section{Bispectrality of $v_{ij}(x,y)$}\label{sec:bispectrality}

From relations \eqref{eq:rec10NBJ} and \eqref{eq:rec01NBJ}, 
we conclude easily that $v_{ij}(x,y)$ satisfy two recurrence relations:
\begin{subequations}
\begin{align}
	&\hspace{-0.4cm}x v_{ij}(x,y) = (k-i-j+1)(r-2) v_{i-1,j}(x,y)+(i+1)v_{i+1,j}(x,y) + \left( i(r-3)+j(r-2) \right) v_{ij}(x,y) \,,\label{eq:recp1} \\
	&\hspace{-0.4cm}y v_{ij}(x,y) =(k-i-j+1)(r-2)j v_{i-1,j}(x,y) + (k-i-j+1)(n-k-j+1)(r-1)v_{i,j-1}(x,y) \label{eq:recp2} \\ 
	&+ (i+1)j v_{i+1,j}(x,y) + (j+1)^2 v_{i,j+1}(x,y) + (i+1)(n-k-j+1)(r-1) v_{i+1,j-1}(x,y) \nonumber \\
	&+ (j+1)^2(r-2) v_{i-1,j+1}(x,y) +j\left( k-i-j+(r-2)i+(n-k-j)(r-1)\right)v_{i,j}(x,y). \nonumber
\end{align} 
\end{subequations}
In this section, we prove that $p_{ij}(x,y)$ satisfy also two difference relations which provides its bispectral property.
Despite the fact that the multivariate polynomials satisfying bispectral property have been already intensively studied \cite{Gri,Tra,GI,GI2,HR,CFR,GW}, it seems that the bispectrality of $v_{i,j}(x,y)$ was not established previously.
Let us emphasize that  $v_{i,j}(x,y)$ is a ``hybrid'' polynomial since it involves different types of univariate polynomials.

\subsection{Difference relation}

The result of Proposition \ref{lem:recuq} concerning the recurrence relation satisfied by $q_{ij}(x,y)$ allows to find a difference relation for $p_{ij}(x,y)$ using the Wilson duality \eqref{eq:qp}. 
\begin{prop}
Recalling that $p_{ij}(x,y)=v_{ij}( \theta_{xy},\mu_{xy})$ with $\theta_{xy}$ and $\mu_{xy}$ given by \eqref{eq:tm}, the following difference relations are satisfied:
\begin{subequations}
\begin{align}
&i p_{ij}(x,y)=\cP_1p_{ij}(x+1,y)+\cP_2p_{ij}(x+1,y-1) 
+\cP_3p_{ij}(x-1,y)
+\cP_4p_{ij}(x-1,y+1)\nonumber\\
&\qquad\qquad+\cP_5p_{ij}(x,y-1)
+\cP_6p_{ij}(x,y+1)
+\cP_7p_{ij}(x,y)\,,\label{eq:diffv1}
\\
&j p_{ij}(x,y)=B p_{ij}(x,y+1)-(B+D) p_{ij}(x,y) +D p_{ij}(x,y-1)\,,\label{eq:diffv2}
\end{align}
\end{subequations}
with 
\begin{subequations}
 \begin{align*}
	& B=\frac{ (y+x-k)(y+x-n-1)(y+k-n)}{(2y+x-n)(2y+x-n-1)}\,, &&D=-\frac{y(y+x-k-1)(y+k-n-1)}{(2y+x-n-1)(2y+x-n-2)}.
\end{align*}
\end{subequations}
and
\begin{subequations}
 \begin{align*}
	& \cP_1=-\frac{(r-2)(n-x-y+1)(k-x-y)}{(r-1)(n-x-2y+1)}\,, &&\cP_2 = \frac{y(r-2)(y+k-n-1)}{(r-1)(n-x-2y+1)}\,, \\
	&\cP_3 = -\frac{(k-x+1-y)x}{(r-1)(n-x-2y+1)}\,, &&
	\cP_4 = \frac{x(y+k-n)}{(r-1)(n-x-2y+1)}\,, \\
	&\cP_5 = -\frac{r-2}{r-1}D\,,\qquad\qquad \cP_6 = -\frac{r-2}{r-1}B\, , 
 &&\cP_7=-\cP_1-\cP_2-\cP_3-\cP_4-\cP_5-\cP_6.
\end{align*}
\end{subequations}
\end{prop}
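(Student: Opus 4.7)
The plan is to derive both difference relations from the recurrences \eqref{eq:qrecu1} and \eqref{eq:qrecu2} of Proposition \ref{lem:recuq} by means of the Wilson duality \eqref{eq:qp}, which we rewrite in the convenient form $q_{ij}(x,y)=\tfrac{m_{ij}}{k_{xy}}p_{xy}(i,j)$. Substituting this into each recurrence, cancelling the common factor $k_{xy}^{-1}$, and then relabeling $(i,j)\leftrightarrow(x,y)$, each recurrence on $q_{ij}(x,y)$ in the indices $(i,j)$ transforms into a difference equation for $p_{ij}(x,y)$ whose shift operators act on the argument $(x,y)$, with coefficients involving the multiplicity ratios $m_{x+s,y+t}/m_{xy}$.

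For the second difference relation \eqref{eq:diffv2}, applying this recipe to \eqref{eq:qrecu2} yields
\begin{equation*}
\mu^\star_{ij}\,p_{ij}(x,y)=\wcA_{xy}\,\tfrac{m_{x,y+1}}{m_{xy}}\,p_{ij}(x,y+1)+\wcB_{xy}\,p_{ij}(x,y)+\wcC_{xy}\,\tfrac{m_{x,y-1}}{m_{xy}}\,p_{ij}(x,y-1).
\end{equation*}
Since $\mu^\star_{ij}=(n-1)-\tfrac{n(n-1)}{k(n-k)}j$ depends affinely on $j$ alone, we rearrange to isolate $j\,p_{ij}(x,y)$. The trivial identity $p_{00}\equiv 1$, obtained by setting $(i,j)=(0,0)$ in the relation above, immediately gives the consistency $\wcA_{xy}\tfrac{m_{x,y+1}}{m_{xy}}+\wcB_{xy}+\wcC_{xy}\tfrac{m_{x,y-1}}{m_{xy}}=n-1$, which forces the middle coefficient of the resulting three-term recurrence to be $-(B+D)$ with $B=-\tfrac{k(n-k)}{n(n-1)}\wcA_{xy}\tfrac{m_{x,y+1}}{m_{xy}}$ and $D=-\tfrac{k(n-k)}{n(n-1)}\wcC_{xy}\tfrac{m_{x,y-1}}{m_{xy}}$.

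For the first difference relation \eqref{eq:diffv1}, the same recipe applied to \eqref{eq:qrecu1} produces an identity of the form $\theta^\star_{ij}\,p_{ij}(x,y)=\sum_{(s,t)}\tilde{c}_{st}(x,y)\,p_{ij}(x+s,y+t)$ summed over the five shifts $(s,t)\in\{(0,0),(\pm 1,0),(1,-1),(-1,1)\}$. Because $\theta^\star_{ij}$ is affine in both $i$ and $j$, we solve for $i\,p_{ij}(x,y)$ by subtracting an appropriate multiple of the relation just obtained for $j\,p_{ij}(x,y)$. The elimination of the $j\,p_{ij}(x,y)$ contribution introduces the two extra shifts $(0,\pm 1)$, producing the seven-term form \eqref{eq:diffv1}.

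The main technical obstacle is the verification that the coefficients match the closed forms in the statement. Concretely, we need the multiplicity ratios computed from $m_{xy}=q_{xy}(0,0)$, which by \eqref{eq:dualeigen} together with $K_i(0,N,p)=(p-1)^i\binom{N}{i}$ and $H_j(0,N,p)=\binom{N}{j}-\binom{N}{j-1}$ evaluates to $m_{xy}=\binom{n}{x}(r-2)^x\bigl(\binom{n-x}{y}-\binom{n-x}{y-1}\bigr)$. Once each ratio is combined with the relevant $\wcA_{xy},\wcB_{xy},\wcC_{xy}$ or $\tilde{c}_{st}$ and the inversion from $\theta^\star_{ij}$ to $i$ is carried out, the matching reduces to polynomial identities in $(x,y,k,n,r)$ that should deliver precisely the expressions $B,D$ and $\cP_1,\dots,\cP_7$ of the statement.
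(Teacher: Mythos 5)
Your proposal is correct and follows essentially the same route as the paper: apply Wilson duality to the dual recurrences of Proposition \ref{lem:recuq}, pick up the multiplicity ratios $m_{x+s,y+t}/m_{xy}$ with $m_{xy}=(r-2)^x\binom{n}{x}\bigl(\binom{n-x}{y}-\binom{n-x}{y-1}\bigr)$, and invert the affine relations between $(\theta^\star_{ij},\mu^\star_{ij})$ and $(i,j)$ (the paper likewise proves \eqref{eq:diffv2} this way and notes \eqref{eq:diffv1} is ``proven similarly''). Your use of the $j=0$ normalization $p_{00}\equiv 1$ to pin down the middle coefficient, and your explicit observation that eliminating the $j\,p_{ij}$ contribution is what produces the extra shifts $(0,\pm 1)$ and the coefficients $\cP_5=-\tfrac{r-2}{r-1}D$, $\cP_6=-\tfrac{r-2}{r-1}B$, are nice clarifying touches but not a different method.
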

\begin{proof}
Using the Wilson duality \eqref{eq:qp} and the recurrence relation for $q_{ij}(xy)$ \eqref{eq:qrecu2},
one gets 
	\begin{align*}
	&q_{01}(i,j) p_{ij}(x,y)
	= q_{01}(i,j) q_{xy}(i,j)\frac{k_{ij}}
	{m_{xy}} =\Big(
	\wcA_{xy} q_{x,y+1}(i,j)  + \wcB_{xy} q_{xy}(i,j) + \wcC_{xy} q_{x,y-1}(i,j)
	\Big)\frac{k_{ij}}
	{m_{xy}}   \\
	&=
	\wcA_{xy}\frac{m_{x,y+1}}{m_{xy}} p_{i,j}(x,y+1)  + \wcB_{xy} p_{i,j}(x,y) + \wcC_{xy}\frac{m_{x,y-1}}{m_{xy}} p_{i,j}(x,y-1).
	\end{align*}
By direct computation, one also gets
	\begin{equation*}
		m_{xy}=q_{xy}(0,0)=(r-2)^x 
		\binom{n}{x} \left(\binom{n-x}{y} - \binom{n-x}{y-1}\right).
	\end{equation*}
 From the explicit expressions \eqref{eq:recuycoe} and recalling that $q_{01}(i,j)=\Big((n-1)-\frac{n(n-1)}{k(n-k)}j\Big)$, 
	we obtain equation \eqref{eq:diffv2}. 
	Equation \eqref{eq:diffv1} is proven similarly. 
\end{proof}

\begin{rem}
The results can be derived also from the difference relation satisfied by the Krawtchouk and Eberlein polynomials directly. 
Making use of the recurrence relation of $q_{ij}(x,y)$ simplifies the computations, 
especially for \eqref{eq:diffv1}. 
\end{rem}

\subsection{Bispectral algebra}\label{subsec:bispalg}

For a given bispectral problem, it is natural to study the associated bispectral algebra. In \cite{Zhedanov}, such an algebra has been introduced for the bispectral problem associated to the Askey--Wilson polynomials and it leads to the definition of the eponym algebra. This algebra appears in many contexts now and, for example, it is used to characterize a Leonard pair \cite{TV},  
which is closely related to the univariate $P$- and $Q$-polynomial association schemes.

Let $W$ be the vector space spanned by $p_{i,j}(x,y)$ for $(i,j)\in \cD$ and for a given pair $(x,y)$.
Let $X^\star$ and $Y^\star$ be the matrices representing the action of the difference operators on $W$:
\begin{subequations}
\begin{align}
 & X^\star p_{i,j}(x,y) = i p_{i,j}(x,y),\\
  & Y^\star p_{i,j}(x,y) = j p_{i,j}(x,y),
\end{align}
\end{subequations}
and let $X$ and $Y$ be the matrices
representing the action of the recurrence operators on $W$:
\begin{subequations}
\begin{align}
 & X p_{i,j}(x,y) = (k-i-j+1)(r-2)p_{i-1,j}(x,y) + \left( i(r-3)+(j-k)(r-2) \right) p_{ij}(x,y) +(i+1)p_{i+1j}(x,y),\label{eq:Ds1}\\
  & Y p_{i,j}(x,y) = (k-i-j+1)(r-2)j p_{i-1,j}(x,y) + (k-i-j+1)(n-k-j+1)(r-1)p_{i,j-1}(x,y) \nonumber \\ 
	&+ (i+1)j p_{i+1,j}(x,y) + (j+1)^2 p_{i,j+1}(x,y) + (i+1)(n-k-j+1)(r-1) p_{i+1,j-1}(x,y) \nonumber \\
	&+ (j+1)^2(r-2) p_{i-1j+1}(x,y) +j\left( k-i-j+(r-2)i+(n-k-j)(r-1)\right)p_{ij}(x,y). \label{eq:Ds2}
\end{align}
\end{subequations}

Let us remark that 
\begin{equation}
[X,Y]=0\,,\qquad [X^\star,Y^\star]=0 \,.
\end{equation}
The second relation is obvious since the matrices are diagonal.
The first one is also easily proven since there exists another basis where $X$ and $Y$ are diagonal. 
By direct computation, one also gets 
\begin{equation}
[X,Y^\star]=0\,.
\end{equation}

The pair $(X,X^\star)$ satisfies the following relations
\begin{subequations}\label{eq:relX}
 \begin{align}
 &[X^\star,[X^\star,X]]=X -(r-3) X^\star -(r-2)(Y^\star-k),\\
 & [X,[X^\star,X]]=(r-3) X -(r-1)^2X^\star -(r-1)(r-2)(Y^\star-k).
\end{align}
\end{subequations}
It is a realization of the Lie algebra $gl_2$ with the three generators $X$, $X^\star$, $[X^\star,X]$ and the central element $Y^\star$.
Remarking that the recurrence and the difference relations associated to $X$ and $X^\star$ are closely related to the Krawtchouk part of $v_{ij}(x,y)$, it is not a surprise that the bispectral algebra is $gl_2$.

The pair $(Y,Y^\star)$ satisfies the following relations
\begin{subequations}\label{eq:relY}
 \begin{align}
 & [Y^\star,[Y^\star,Y]]=-2(1-r)(Y^\star)^2 +(n(1-r)-X)Y^\star + Y,\\
  & [Y,[Y^\star,Y]]=-2(r-1)\{Y,Y^\star\} + Y(X-n(1-r) )-2(k-n)(r-1)( X+k(r-1))\nonumber\\
  &\qquad -Y^\star\left( X^2+2(1-r)(n-2k-1)X+(r-1)^2(2n+(n-2k)^2)\right).
\end{align}
\end{subequations}
These relations are those of (a central extended version of) the Hahn algebra.

\subsection{Subconstituent algebra}

The matrices $A_{ij}$ form a commutative algebra, known as the Bose--Mesner algebra. 
For any association scheme, it is useful to introduce a more general algebra called the subconstituent algebra (or Terwilliger algebra) \cite{ter1,ter2,ter3}. We now do this for the non-binary Johnson scheme.

Let us fix an element $\bx$ of the set $S$ given in \eqref{eq:setXnbJ}. For $(i,j) \in \cD$ and $(m,n) \in \cD^\star$, we define the diagonal matrices $A_{mn}^\star=A_{mn}^\star(\bx)$ and $E_{ij}^\star=E_{ij}^\star(\bx)$ with entries
\begin{align}
(A^\star_{mn})_{\by \by }&=\bv (E_{mn})_{\bx\by},\\
(E^\star_{ij})_{\by \by } &=(A_{ij})_{\bx\by},
\end{align}
where $\bv = |S|$ here.
The matrices $A_{mn}^\star$  are the dual adjacency matrices (with respect to $\bx$) and they satisfy 
\begin{equation}
 A^\star_{ij}  A^\star_{k\ell} = \sum_{(m,n)\in\cD^\star} q_{ij,k\ell}^{mn} A^\star_{mn}.
\end{equation}
The matrices $E_{ij}^\star$  are the dual idempotents (with respect to $\bx$) and they satisfy
\begin{equation}
    E_{ij}^\star E_{mn}^\star = \delta_{ij,mn} E_{ij}^\star, \quad \sum_{(i,j)\in \cD} E_{ij}^\star = \II.
\end{equation}
We also have the relations
\begin{equation}
    A^\star_{mn} = \sum_{(i,j) \in \cD} q_{mn}(ij) E_{ij}^\star, \quad E^\star_{ij} =\frac{1}{\bv}\sum_{(m,n) \in \cD^\star} p_{ij}(mn) A^\star_{mn}. 
\end{equation}
The commutative algebra generalized by the matrices $A^\star_{ij}$ (or $E^\star_{ij}$) is called the dual Bose--Mesner algebra.
For a bivariate $Q$-polynomial association scheme, one gets 
\begin{equation}
 A^\star_{mn}=v^\star_{mn}( A^\star_{10}, A^\star_{01}).
\end{equation}

The algebra formed by $A_{ij}$ and $A^\star_{ij}$ is called the subconstituent algebra and is usually non-commutative. The following known result gives some general relations (adapted to the bivariate situation) which hold in the subconstituent algebra of an association scheme.
\begin{prop}\label{prop:EAE} \cite[Lemma 3.2]{ter1} For any fixed element $\bx \in S$ we have
    \begin{align}
        & E_{ij}^\star A_{mn} E_{rs}^\star = 0 \quad \Leftrightarrow \quad p_{ij,mn}^{rs} =0, \quad (i,j),(m,n),(r,s) \in \cD \\
        & E_{ij} A_{mn}^\star E_{rs} = 0 \quad \Leftrightarrow \quad q_{ij,mn}^{rs} =0, \quad (i,j),(m,n),(r,s) \in \cD^\star. 
    \end{align}
\end{prop}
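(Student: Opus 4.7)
The plan is to prove each equivalence by direct matrix-entry computation: the first combinatorially, via the definition of intersection numbers, and the second via a Hilbert--Schmidt norm identity combined with the Krein relation. For the first equivalence, since $E_{ij}^\star$ and $E_{rs}^\star$ are diagonal with $(E_{ij}^\star)_{\by\by}=(A_{ij})_{\bx\by}\in\{0,1\}$ and similarly for $E_{rs}^\star$, the $(\by,\bz)$-entry of $E_{ij}^\star A_{mn} E_{rs}^\star$ equals the product $(A_{ij})_{\bx\by}(A_{mn})_{\by\bz}(A_{rs})_{\bx\bz}$, which is $1$ exactly when $(\bx,\by)\in R_{ij}$, $(\by,\bz)\in R_{mn}$, and $(\bx,\bz)\in R_{rs}$ all hold. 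Hence $E_{ij}^\star A_{mn} E_{rs}^\star=0$ iff no such pair $(\by,\bz)$ exists, which is precisely $p_{ij,mn}^{rs}=0$, since the intersection number counts exactly such $\by$ for any fixed pair in $R_{rs}$ (a count that is constant over the relation).

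For the second equivalence, I would compute the squared Hilbert--Schmidt norm
\begin{equation*}
\|E_{ij}A_{mn}^\star E_{rs}\|^{2}=\operatorname{tr}\bigl(E_{ij}A_{mn}^\star E_{rs}A_{mn}^\star\bigr),
\end{equation*}
obtained by using cyclicity of the trace together with the symmetry and idempotency of $E_{ij}$ and $E_{rs}$. Substituting $(A_{mn}^\star)_{\bw\bw}=\bv(E_{mn})_{\bx\bw}$ and exploiting the symmetry of the primitive idempotents to fold two of the sums into a Hadamard product recasts this as $\bv^{2}\bigl(E_{mn}(E_{ij}\circ E_{rs})E_{mn}\bigr)_{\bx\bx}$. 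Applying the Krein relation $E_{ij}\circ E_{rs}=\bv^{-1}\sum_{ab}q_{ij,rs}^{ab}E_{ab}$ together with the orthogonality $E_{mn}E_{ab}E_{mn}=\delta_{ab,mn}E_{mn}$ collapses the norm to $m_{mn}\,q_{ij,rs}^{mn}$, where $m_{mn}=\bv(E_{mn})_{\bx\bx}>0$. To match the indexing in the statement, one invokes the symmetric triple-trace identity $\operatorname{tr}\bigl((E_{ij}\circ E_{kl})E_{mn}\bigr)=\sum_{\by,\bz}(E_{ij})_{\by\bz}(E_{kl})_{\by\bz}(E_{mn})_{\by\bz}$, which is manifestly symmetric in the three index pairs; this yields the permutation identity $q_{ij,rs}^{mn}/m_{mn}=q_{ij,mn}^{rs}/m_{rs}$, so $q_{ij,rs}^{mn}=0$ iff $q_{ij,mn}^{rs}=0$, giving the second equivalence.

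The most delicate step is assembling the trace computation cleanly and exploiting the permutation symmetry of Krein parameters; once these are in place, the second equivalence mirrors the combinatorial proof of the first. An alternative route, avoiding the triple-trace identity, is to establish the pointwise identity $(E_{ij}\circ E_{mn})E_{rs}=\bv^{-1}q_{ij,mn}^{rs}\,E_{rs}$ directly from the Krein relation and the orthogonality $E_{ab}E_{rs}=\delta_{ab,rs}E_{rs}$, and then translate this into a statement about $E_{ij}A_{mn}^\star E_{rs}$ via the correspondence between Hadamard products and conjugation by the diagonal matrix $A_{mn}^\star$. Either way, the core point is the duality between intersection numbers and Krein parameters under the exchange of ordinary and Hadamard products, which is the foundational idea underpinning Terwilliger's subconstituent algebra.
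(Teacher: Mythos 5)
The paper offers no proof of this proposition at all: it is quoted verbatim from Terwilliger's subconstituent algebra paper (the citation \cite[Lemma 3.2]{ter1}), so there is nothing internal to compare against. Your argument is a correct, self-contained reconstruction along the standard lines. The first equivalence is fine: the $(\by,\bz)$-entry of $E_{ij}^\star A_{mn} E_{rs}^\star$ is indeed the indicator of $(\bx,\by)\in R_{ij}$, $(\by,\bz)\in R_{mn}$, $(\bx,\bz)\in R_{rs}$, and the backward direction uses exactly the two facts you flag parenthetically (constancy of $p_{ij,mn}^{rs}$ over $R_{rs}$, plus positivity of the valence $k_{rs}$ so that some $\bz$ with $(\bx,\bz)\in R_{rs}$ exists for the fixed $\bx$). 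The trace computation for the second equivalence also checks out: $\|E_{ij}A_{mn}^\star E_{rs}\|^2=\bv^2\bigl(E_{mn}(E_{ij}\circ E_{rs})E_{mn}\bigr)_{\bx\bx}=m_{mn}\,q_{ij,rs}^{mn}$, and since the left side is a norm this simultaneously shows the Krein parameter is nonnegative and vanishes exactly when the matrix does. One small slip: the permutation identity you derive from the symmetric triple trace should read $m_{mn}\,q_{ij,rs}^{mn}=m_{rs}\,q_{ij,mn}^{rs}$ (the multiplicities multiply the Krein parameters, they do not divide them), since $\operatorname{tr}\bigl((E_{ij}\circ E_{rs})E_{mn}\bigr)=\bv^{-1}q_{ij,rs}^{mn}m_{mn}$. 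This does not affect your conclusion, because $m_{mn},m_{rs}>0$ in either formulation, but the identity as written is incorrect and should be fixed.
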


Since the non-binary Johnson scheme is bivariate $P$- and $Q$-polynomial for $n\geq 2k-1$, its subconstituent algebra is generated by the four elements: $A_{10}$, $A_{01}$, $A^\star_{10}$ and $A^\star_{01}$. Note the useful relations
\begin{align}
    &A_{10} E_{ij} = E_{ij} A_{10}  = \theta_{ij} E_{ij}, \qquad A_{01} E_{ij} = E_{ij} A_{01} = \mu_{ij} E_{ij}, \\
    &A_{10}^\star E_{ij}^\star = E_{ij}^\star A_{10}^\star =  \theta_{ij}^\star E_{ij}^\star, \qquad A_{01}^\star E_{ij}^\star = E_{ij}^\star A_{01}^\star =  \mu_{ij}^\star E_{ij}^\star. 
\end{align}

We already know that $[A_{10},A_{01}]=[A^*_{10},A^*_{01}] = 0$ since the Bose--Mesner algebra and dual Bose--Mesner algebra are both commutative. We now provide some additional relations.
\begin{prop}
The following relations hold in the subconstituent algebra of the non-binary Johnson scheme $J_r(k,n)$ when $n\geq 2k-1$.
The elements $A^*_{01}$ and $A_{10}$ commute:
\begin{align}
    &[A^*_{01}, A_{10}]=0. \label{eq:relter1}
\end{align} 
The elements $A^*_{10}$ and $A_{10}$ satisfy the following relations:
\begin{subequations} \label{eq:DG1}
\begin{align}
    &[A^*_{10}, [A^*_{10}, [A^*_{10}, A_{10}]]] = 
 \left(\frac{n(r-1)}{k}\right)^2[A^*_{10}, A_{10}], \label{eq:relter2} \\
 &[A_{10}, [A_{10}, [A_{10}, A^*_{10}]]] = 
 (r-1)^2[A_{10},A^*_{10}]. \label{eq:relter4}
\end{align}
\end{subequations}
 The elements $A^*_{01}$ and $A_{01}$ satisfy (recall that  $A_{10}$ commutes with these two elements):
 \begin{subequations} \label{eq:TD1}
  \begin{align}
  &[A^*_{01}, [A^*_{01}, [A^*_{01}, A_{01}]]] = \left(\frac{n(n-1)}{k(n-k)}\right)^2[A^*_{01}, A_{01}], \label{eq:relter3} \\
 &[A_{01}, 2A_{01}A^*_{01}A_{01} - \{A_{01}^2,A^*_{01}\}  + 2(r-1)\{A_{01},A^*_{01}\} +(c_1+c_2A_{10}+A_{10}^2)A^*_{01}]=0, \label{eq:relter5}
\end{align}
\end{subequations}
where 
\begin{equation}
    c_1 = n(n+2)(r-1)^2 + k^2r^2 - 2k(r-1)(r(n+1)-2), \quad c_2 = 2 (kr-(n-1)(r-1)). \label{eq:coeffrelter5} 
\end{equation}
\end{prop}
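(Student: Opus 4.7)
The plan is to verify each of the five relations by sandwiching it between appropriate principal idempotents $E_{ij}$ or dual idempotents $E_{ij}^\star$ and invoking Proposition~\ref{prop:EAE}: a factor $E_{ij}^\star A_{mn} E_{ab}^\star$ vanishes unless $p_{ij,mn}^{ab}\neq 0$, and $E_{ij} A_{mn}^\star E_{ab}$ vanishes unless $q_{ij,mn}^{ab}\neq 0$. I will use throughout that $A_{10}^\star, A_{01}^\star$ are diagonalized by the $E_{ij}^\star$'s with eigenvalues $\theta_{ij}^\star,\mu_{ij}^\star$, and $A_{10},A_{01}$ by the $E_{ij}$'s with eigenvalues $\theta_{ij},\mu_{ij}$; each commutator then produces a scalar factor. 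In this way every identity reduces to a finite collection of polynomial identities in the eigenvalues, restricted to the nonzero supports shown in Figure~\ref{fig:recu}.

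For \eqref{eq:relter1}, sandwiching by $E_{ij}^\star \cdot E_{ab}^\star$ produces $(\mu_{ij}^\star - \mu_{ab}^\star)\,E_{ij}^\star A_{10} E_{ab}^\star$; Figure~\ref{fig:recp10} forces $b=j$ on the nonzero support, and since $\mu_{xy}^\star$ depends only on $y$ by \eqref{eq:mus}, the scalar vanishes. The three Dolan--Grady-type identities \eqref{eq:relter2}, \eqref{eq:relter4}, \eqref{eq:relter3} are of the form $[X,[X,[X,Y]]]=\beta^2[X,Y]$; sandwiched with the idempotents that diagonalize $X$, they reduce to the scalar identity $(\lambda_{ij}-\lambda_{ab})^3 = \beta^2(\lambda_{ij}-\lambda_{ab})$ on the relevant support. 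For \eqref{eq:relter2} (Figure~\ref{fig:recp10}) the support forces $b=j$, $a-i\in\{-1,0,1\}$, giving $\theta_{ij}^\star-\theta_{ab}^\star = \tfrac{n(r-1)}{k}(a-i)$; for \eqref{eq:relter4} (Figure~\ref{fig:recq10}) the five allowed vectors all satisfy $a-i\in\{-1,0,1\}$, giving $\theta_{ij}-\theta_{ab}=(r-1)(a-i)$; for \eqref{eq:relter3} (Figure~\ref{fig:recp01}) the nine allowed vectors all satisfy $b-j\in\{-1,0,1\}$, giving $\mu_{ij}^\star-\mu_{ab}^\star=\tfrac{n(n-1)}{k(n-k)}(b-j)$. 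In every case the cubed difference equals $\beta^2$ times the linear difference trivially.

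For \eqref{eq:relter5}, I will first use the reformulation $2A_{01}A_{01}^\star A_{01}-\{A_{01}^2,A_{01}^\star\}=-[A_{01},[A_{01},A_{01}^\star]]$ and the fact that $[A_{01},A_{10}]=0$ to rewrite the identity as
\[
[A_{01},[A_{01},[A_{01},A_{01}^\star]]]=2(r-1)[A_{01}^2,A_{01}^\star]+(c_1+c_2A_{10}+A_{10}^2)[A_{01},A_{01}^\star].
\]
I then sandwich with $E_{ij}\cdot E_{ab}$. Figure~\ref{fig:recq01} restricts the support to $a=i$, $b-j\in\{-1,0,1\}$, so $\theta_{ab}=\theta_{ij}$, and on the nontrivial part $b=j\pm 1$ I may divide by $\mu_{ij}-\mu_{ab}$. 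The identity reduces to the scalar relation
\[
(\mu_{ij}-\mu_{ab})^2 = 2(r-1)(\mu_{ij}+\mu_{ab})+c_1+c_2\theta_{ij}+\theta_{ij}^2 .
\]
Setting $u=k-i-j$, $v=n-k-j$ and using $(u+v)^2-4uv=(u-v)^2$, I compute that the left-hand side equals $(r-1)^2\bigl[(n+i-2k)^2+2(n-i)\bigr]$ independently of whether $b=j+1$ or $b=j-1$.

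The main obstacle will be the final polynomial identity
\[
c_1+c_2\theta_{ij}+\theta_{ij}^2=(r-1)^2\bigl[(n+i-2k)^2+2(n-i)\bigr]
\]
in $r,k,n,i$, with $\theta_{ij}=k(r-2)-i(r-1)$. This is the only nontrivial computation; I expect to verify it by expanding both sides after the substitutions $s=r-1$ and $w=k-i$ (so $\theta_{ij}=ws-k$) and checking that the $s^1$ and constant terms cancel identically, which will pin down the values of $c_1,c_2$ in \eqref{eq:coeffrelter5} uniquely.
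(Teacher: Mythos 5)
Your proposal is correct and follows essentially the same route as the paper: sandwiching each relation between the idempotents that diagonalize one of the operators, invoking Proposition~\ref{prop:EAE} together with the vanishing patterns of Figure~\ref{fig:recu} to restrict the support, and reducing to scalar identities in the eigenvalues \eqref{eq:tm} and \eqref{eq:tms}. Your reformulation of \eqref{eq:relter5} as a nested-commutator identity before sandwiching is only a cosmetic repackaging of the paper's scalar factor (note $2\mu_{ij}\mu_{ab}-\mu_{ij}^2-\mu_{ab}^2=-(\mu_{ij}-\mu_{ab})^2$), and the final polynomial identity you set up does check out with the stated $c_1,c_2$.
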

\begin{proof}
    Using the fact that the sum of the idempotents is the identity, we can write  
    \begin{equation}
        [A^*_{01}, A_{10}] = \sum_{ij,ab \in \cD} E_{ij}^\star(A^*_{01}A_{10}-A_{10}A^*_{01})E_{ab}^\star = \sum_{ij,ab \in \cD} (\mu_{ij}^\star-\mu_{ab}^\star)E_{ij}^\star A_{10}E_{ab}^\star. \label{eq:relter1sum}
    \end{equation}
    Since the non-binary Johnson scheme is bivariate $P$-polynomial of type $(1,0)$, we have that $p_{ij,10}^{ab} =0$ if $|a-i|>1$ or $b \neq j$ (see Figure \ref{fig:recp10}). By Proposition \ref{prop:EAE}, it thus follows that $E_{ij}^\star A_{10}E_{ab}^\star =0$ if $b\neq j$. Equation \eqref{eq:mus} implies that $\mu_{xy}^\star$ does not depend on $x$, and hence $\mu_{ij}^\star-\mu_{ab}^\star=0$ if $b=j$. Therefore every term in the sum \eqref{eq:relter1sum} is zero, which proves \eqref{eq:relter1}. 

    To prove \eqref{eq:relter2}, we proceed similarly to write:
    \begin{equation}
        [A^*_{10}, [A^*_{10}, [A^*_{10}, A_{10}]]] = \sum_{ij,ab \in \cD} (\theta_{ij}^\star-\theta_{ab}^\star)^2E_{ij}^\star [A^*_{10}, A_{10}]E_{ab}^\star.
    \end{equation}
    Using equation \eqref{eq:thetas}, one gets
    \begin{equation}
        \theta_{ij}^\star-\theta_{ab}^\star = \frac{n}{k}((r-2)(b-j)+(r-1)(a-i)).
    \end{equation}
    Again, $E_{ij}^\star A_{10} E_{ab}^\star=0$ if $|a-i|>1$ or $b \neq j$. Moreover $\theta_{ij}^\star-\theta_{ab}^\star =0$ if $(i,j)=(a,b)$. Therefore $E_{ij}^\star[A^*_{10}, A_{10}]E_{ab}^\star = (\theta_{ij}^\star-\theta_{ab}^\star)E_{ij}^\star A_{10}E_{ab}^\star$ is possibly nonzero only if $a=i\pm 1$ and $b=j$. In any case, 
    \begin{equation}
        (\theta_{ij}^\star-\theta_{i\pm1,j}^\star)^2 = \left(\frac{n}{k}(r-1)\right)^2.
    \end{equation}
    The result \eqref{eq:relter2} then easily follows.

    Similarly for \eqref{eq:relter3},
    \begin{align}
        [A^*_{01}, [A^*_{01}, [A^*_{01}, A_{01}]]] &= \sum_{ij,ab \in \cD} (\mu_{ij}^\star-\mu_{ab}^\star)^2E_{ij}^\star [A^*_{01}, A_{01}]E_{ab}^\star.
    \end{align}
    Using equation \eqref{eq:mus}, one gets
    \begin{equation}
        \mu_{ij}^\star-\mu_{ab}^\star = \frac{n(n-1)}{k(n-k)}(b-j).
    \end{equation}
    For the non-binary Johnson scheme, we have $p_{ij,01}^{ab} =0$ if $|b-j|>1$ (see Figure \ref{fig:recp01}). If $b=j$, then $\mu_{ij}^\star-\mu_{ab}^\star =0$. Therefore, using Proposition \ref{prop:EAE}, $E_{ij}^\star[A^*_{01}, A_{01}]E_{ab}^\star = (\mu_{ij}^\star-\mu_{ab}^\star)E_{ij}^\star A_{01}E_{ab}^\star$ is possibly nonzero only if $b=j\pm 1$. In any case, 
    \begin{equation}
        (\mu_{ij}^\star-\mu_{a,j\pm1}^\star)^2 = \left(\frac{n(n-1)}{k(n-k)}\right)^2.
    \end{equation}
    Relation \eqref{eq:relter3} then follows.

    The computations are similar for \eqref{eq:relter4}. One must use the fact that the non-binary Johnson scheme is $Q$-polynomial of type $(0,\frac{1}{2})$ (for $n\geq 2k-1$), and hence $q_{ij,10}^{ab} =0$ if $|a-i|>1$ (see Figure \ref{fig:recq10}). One must also use equation \eqref{eq:theta} to find 
    \begin{equation}
        \theta_{ij}-\theta_{ab} = (r-1)(a-i).
    \end{equation}

    Finally, to prove \eqref{eq:relter5}, one uses again the idempotents to write the commutator on the LHS as
    \begin{equation}
        \sum_{ij,ab \in \cD} \left( 2 \mu_{ij}\mu_{ab} -(\mu_{ij}^2 + \mu_{ab}^2)+2(r-1)(\mu_{ij}+\mu_{ab}) +(c_1+c_2\theta_{ij}+\theta_{ij}^2) \right)(\mu_{ij}-\mu_{ab})E_{ij}A_{01}^\star E_{ab}. \label{eq:relter5sum}
    \end{equation}
    Since $q_{ij,01}^{ab} =0$ if $a\neq i$ or $|b-j|>1$, and since $\mu_{ij}-\mu_{ab} =0$ if $(i,j)=(a,b)$ (see Figure \ref{fig:recq01}), the only nonzero terms in the sum \eqref{eq:relter5sum} are those with $a=i$ and $b=j\pm 1$. In any case, it can be verified using the explicit expressions \eqref{eq:tm} for $\theta_{ij}$ and $\mu_{ij}$ that the first factor in the sum \eqref{eq:relter5sum} vanishes. This proves relation \eqref{eq:relter5}.  
\end{proof}

\begin{rem} Both relations \eqref{eq:DG1} between $A^*_{10}$ and $A_{10}$ are called the Dolan--Grady relations \cite{DG82}. These relations define the Onsager algebra \cite{Dav} which has been introduced to study the Ising model \cite{Ons}.

Both relations \eqref{eq:TD1} between $A^*_{01}$ and $A_{01}$ are known as the tridiagonal
 relations. Here we have a central extension of these relations 
 since $A_{10}$, which commutes with $A^*_{01}$ and $A_{01}$, appears in the coefficient of the relations.
 These relations have been introduced in \cite{ter3} in the context of univariate $P$- and $Q$-polynomial association schemes. It has been shown that these relations are satisfied for the subconstituent algebra of any univariate $P$- and $Q$-polynomial association scheme.
\end{rem}

\subsection{Relation between the subconstituent algebra and the bispectral algebra}

For any association scheme with $N$ classes, the subconstituent algebra (with respect to any vertex) has an irreducible module of dimension $N+1$ called the primary module \cite{ter1}. In the case of univariate $P$- and $Q$-polynomial association schemes, the representation of the subconstituent algebra on the primary module corresponds to the action of the bispectral operators on the associated polynomials, see for instance \cite{TerCourse}. 

The situation is analogous for bivariate $P$- and $Q$-polynomial association schemes. If we denote by $\hat{x}$ the vector with zero components everywhere except for the component labelled by $\bx \in S$ which is one, then a basis for the primary module is given by $\{A_{ij}\hat{x}\}_{(i,j)\in\cD}$. The representations of $A_{10}^\star$ and $A_{01}^\star$ on this basis are diagonal matrices, with the dual eigenvalues $\theta^\star_{ij}=q_{10}(ij)$ and $\mu^\star_{ij}=q_{01}(ij)$ as diagonal matrix elements, while the representations of $A_{10}$ and $A_{01}$ correspond to the action of the recurrence operators associated to the eigenvalues $\theta_{xy} = p_{10}(x,y)$ and $\mu_{xy} = p_{01}(x,y)$. Another basis for the primary module is given by the vectors $\{\bv E_{ij}\hat{x}\}_{(i,j)\in\cD^\star}$. On this basis, $A_{10}$ and $A_{01}$ are represented by diagonal matrices, with the eigenvalues $\theta_{xy}$ and $\mu_{xy}$ as diagonal elements, while the representations of $A_{10}^\star$ and $A_{01}^\star$ correspond to the action of the difference operators associated to the dual eigenvalues $\theta^\star_{ij}$ and $\mu^\star_{ij}$. Using the explicit expressions \eqref{eq:tm} and \eqref{eq:tms}, and comparing with the definitions of $X$, $Y$, $X^\star$, $Y^\star$ in Subsection \ref{subsec:bispalg}, one gets the following correspondence between the generators of the bispectral algebra and those of the subconstituent algebra on the primary module:   
\begin{subequations}\label{eq:XA}
\begin{align}
&X = A_{10}-k(r-2),\\
&Y = A_{01},\\
&X^\star = \frac{k(r-2)}{n(r-1)}\left( k + \frac{n-k}{n-1} A^*_{01} \right) - \frac{k}{n(r-1)} A^*_{10}, \\
&Y^\star = \frac{k(n-k)}{n}\left( 1- \frac{1}{n-1} A^*_{01}  \right).
\end{align}
\end{subequations}
\begin{rem}Relations \eqref{eq:XA} imply that $X$, $X^\star$, $Y$ and $Y^\star$ satisfy equivalent relations to the ones satisfied by $A_{10}$, $A_{01}$, $A^\star_{10}$, $A^\star_{01}$.
However, let us emphasize that the reverse is false: 
 $A_{10}$, $A_{01}$, $A^\star_{10}$, $A^\star_{01}$ do not satisfy the relations \eqref{eq:relX} and \eqref{eq:relY} verified by  $X$, $X^\star$, $Y$ and $Y^\star$. 
\end{rem}

\begin{rem}There exist other relations between $X$, $X^\star$, $Y$ and $Y^\star$ as well as between $A_{10}$, $A_{01}$, $A^\star_{10}$, $A^\star_{01}$. 
However, they seem quite complicated and are not displayed here.  
It would be very interesting to better understand these algebras
for any bivariate $P$- and $Q$- polynomial association scheme as it was done for the univariate case in \cite{ter3}. 
\end{rem}

\begin{rem}
The eigenvalues of association schemes based on attenuated spaces are given by the $q$-Krawtchouk polynomials and $q$-Hahn polynomials \cite{Kur}. 
Similar techniques as the ones used in this work could be used to show that these association schemes are also bivariate $Q$-polynomial.  
\end{rem}
\appendix

\section{Hypergeometric polynomials \label{app:A}}

We recall and prove in this appendix some properties 
of the Krawtchouk, Eberlein (dual Hahn), and Hahn polynomials.
They are given in terms of hypergeometric functions \cite{KLS}. 
Let us recall that the hypergeometric function ${_r}F_s$ is defined by the series 
\begin{equation*}\label{eq:hyperrFs}
	{_r}F_s\left( \begin{array}{c}
                                               a_1,\ldots, a_r\\
                                               b_1,\ldots, b_s
                                              \end{array}
 	 \Big| z \right)
	 =\sum_{\ell=0}^\infty \frac{(a_1)_{\ell}\cdots (a_r)_{\ell}}
	 {(b_1)_{\ell}\cdots (b_s)_{\ell}}
	 \frac{z^{\ell}}{\ell !},
\end{equation*} 
where $(a)_k=a(a+1)\cdots (a+k-1)$ is the Pochhammer symbol. 
The Krawtchouk, dual Hahn and Hahn polynomials are defined respectively by,
for $i=0,1,\ldots, n$,
\begin{align}\label{eq:hyperKraw}
	&\hat K_i(x;p,n)={_2}F_1\left( \begin{array}{c}
                                               -i,-x\\
                                               -n
                                              \end{array}
  \Big| \frac 1p \right), 
 \,,\\
& R_i(\lambda(x);\gamma,\delta,n)={_3}F_2\left( \begin{array}{c}
                                               -i,-x,x+\gamma+\delta+1\\
                                               \gamma+1,-n
                                              \end{array}
  \Big|1 \right)\,,\\
&  Q_i(x;\alpha,\beta,n)={_3}F_2\left( \begin{array}{c}
                                               -i, i+\alpha+\beta+1,-x\\
                                               \alpha+1,-n
                                              \end{array}
  \Big|1 \right)\,.
\end{align} 
The polynomials used in this paper are slightly modified in comparison to \cite{KLS}
\begin{subequations} 
\begin{align}
& K_i(x,N,p)= \binom{N}{i}(p-1)^i \hat K_i(x;(p-1)/p,N)\\
 &H_{i}(x,N,p)= \left(\, \binom{N}{i}- \binom{N}{i-1} \right) Q_i(x;\mu-N-1,-\mu-1,\mu),\\ 
 & E_i(x,N,p)=\binom{p}{i} \binom{N-p}{i} R_i(\lambda(x);\mu-N-1,-\mu-1,\mu),
\end{align}
\end{subequations}
with $\mu=\text{min}(p,N-p)$.
Using the recurrence relation of the Krawtchouk polynomials $\hat K_i$ given in \cite{KLS}, one gets that of $K_i$ 
\begin{equation}\label{eq:krawrecur}
	pxK_i(x,N,p)=-(i+1)K_{i+1}(x,N,p) + \big(i+(p-1)(N-i)\big)K_i(x,N,p)-
	(p-1)(N-i+1)K_{i-1}(x,N,p)\,.
\end{equation}

The recurrence relation for $Q_i(x;\alpha,\beta,n)$ is \cite{KLS}
\begin{equation}
	-xQ_i(x;\alpha,\beta,n)=A_iQ_{i+1}(x;\alpha,\beta,n)-(A_i+C_i)Q_i(x;\alpha,\beta,n)+C_iQ_{i-1}(x;\alpha,\beta,n), 
\end{equation}
where
    \begin{align}
	& A_i=\frac{(i+\alpha+\beta+1)(i+\alpha+1)(n-i)}
		{(2i+\alpha+\beta+1)(2i+\alpha+\beta+2)}\, ,\qquad C_i=\frac{i(i+\alpha+\beta+n+1)(i+\beta)}
		{(2i+\alpha+\beta)(2i+\alpha+\beta+1)}.
\end{align}
We deduce that of $H_i(x,N,p)$:
\begin{equation}\label{eq:hahnrecur}
	-x H_r(x,N,p)=
	A_r H_{r+1}(x,N,p) + B_r H_{r}(x,N,p) + C_r H_{r-1}(x,N,p),
\end{equation}
where 
\begin{subequations} \label{eq:hanrecur}
   \begin{align}
	& A_r(N,p)=\frac{(r-N+p)(p-r)(r+1)}{(2r-N)(N-2r-1)}, \\
	& B_r(N,p)=-\frac{r^2N-rN(N+1)+(2+N)(N-p)p}{(2r-N-2)(2r-N)}, \\
	& C_r(N,p)=-\frac{(r-N-1+p)(r-p-1)(N-r+2)}{(2r-N-2)(N-2r+3)}.
\end{align}
\end{subequations}

\vspace{2cm}
\noindent \textbf{Acknowledgements.}
NC thanks the CRM for its hospitality and is supported by the international research project AAPT of the CNRS and the ANR Project AHA ANR-18-CE40-0001. 
The research of LV is supported by a Discovery Grant from the Natural Sciences and Engineering Research Council (NSERC) of Canada.
MZ holds an Alexander-Graham-Bell scholarship from the Natural Sciences and Engineering Research Council of Canada (NSERC).

\end{document}